\newtheorem{thm}{\bf{Theorem}}[section]
\newtheorem{rem}[thm]{\bf{Remark}}
\begin{document}

\title[]{Multistage stochastic programs with a random number of stages: dynamic programming equations, solution methods, and application to portfolio selection}

\maketitle 

\begin{center}
Vincent Guigues\\
School of Applied Mathematics, FGV\\
Praia de Botafogo, Rio de Janeiro, Brazil\\ 
{\tt vincent.guigues@fgv.br}
\end{center}

\date{}

\begin{abstract} We introduce the class of multistage stochastic optimization problems with a random number of stages.
For such problems, we show how to write dynamic programming equations and  how to solve these equations using
the Stochastic Dual Dynamic Programming algorithm. Finally, we consider a portfolio selection problem
over an optimization period of random duration. For several instances of this problem, we show the 
gain obtained using a policy that
takes the randomness of the number of stages into account
over a policy built taking a fixed number of stages (namely the maximal possible number of stages).\\
\end{abstract}

\par {\textbf{Keywords:} Stochastic programming \and Random number of stages \and SDDP \and Portfolio selection}.\\

\par AMS subject classifications: 90C15, 90C90.

\section{Introduction}

Multistage Stochastic Programs (MSPs) are common in finance and in many areas of engineering, see for instance \cite{shadenrbook} and references therein.
These models are useful when a sequence of decisions has to be taken over an optimization period of $T$ stages knowing that
these decisions must satisfy almost surely random constraints and induce some random costs \cite{shadenrbook, aloispflugbook, romischpflugbook, shapiroruszbook, philpshapiro}.
To the best of our knowledge all MSPs considered so far in the literature have a known (finite in general) number of stages.
However, for many applications the number of stages, i.e., 
the {\em{real}} optimization period, is not known in advance. It is easy to name a few of these applications:
\begin{itemize}
\item A company may want to determine
optimal investments over its lifetime \cite{mulveyshetty, yliu, singhfaircloth, mulveykim}. In this situation, the optimization period ends when the company disappears either
because it goes bankrupt, or because it is bought by another company, or because it decides to stop its activities \cite{lifetimecompanies2015}.
These three stopping times, which determine the number of stages $T$, are indeed random.
\item A fund manager can decide to stop his investments when the fund reaches a given target. This stopping time is again
random, depending on the random returns of the investments and of the  investment strategy.
\item Multistage stochastic portfolio optimization \cite{dantinfanger, bentalnemmarg, FILE}: an individual may invest his money in financial assets until his death or until he obtains a given amount used
for covering some expense \cite{bradleycrane}. Again, both stoppping times are random.
\item An hedge fund may have to deal with longevity risk \cite{andrewmichael, financebook, almbook, kusyziemba}, with payout ratios for a given set of individuals spreading over
random time windows .
\end{itemize}
The examples above are examples in finance but in many other areas, for instance logistics \cite{chendemello}, power management \cite{pereirapinto, pereira, varedf, robprodman},
and capacity planning and expansion \cite{chenlitirupati, davisdempstersethi, jinryanwatsonwood}, MSPs with a random optimization period could be useful, especially for 
long-term optimization \cite{flachbarrosopereira} when the optimization period depends on the lifetime of individuals or companies.

It is therefore natural to consider multistage stochastic programs having a random number of stages.
The study of these problems passes through two successive steps: (i) a modelling step to define a policy
and (ii) an algorithmic step to build a policy, i.e., 
a solution method allowing us to compute decisions on any realization
of the uncertainty. Guided by the fact that there exist efficient solution methods for MSPs based on dynamic programming equations
(for instance Stochastic Dual Dynamic Programming (SDDP) \cite{pereira} and Approximate Dynamic Programming (ADP) \cite{powellbook}),
our goal for (i) is to write dynamic programming equations for a multistage stochastic program with a random number of stages.
The paper is organized as follows.
In Section \ref{dpequations}, we define multistage stochastic optimization problems with a random number of
stages and show how to write dynamic programming equations for these problems. We show in particular, that compared to the case
where the number of stages is fixed, two new features appear: first
we need to add an extra state variable, denoted by $D_{t-1}$ for stage $t$, allowing us to know if the optimization period is already over or not
and second for each stage instead of just one cost-to-go function we have two cost-to-go functions, one if the optimization period
is already over (this is the null function) and another one when there remains additional stages for the optimization period.
In Section \ref{dpmoregeneral}, we write dynamic programming equations for a slightly larger class of MSPs, still having a random number of stages, but
where the cost function for the last (random) stage and the cost functions for the remaining stages are taken from two sets
of functions. We provide a portfolio selection model as an example of such problems.

In the case when the underlying stochastic process $\xi_t$ neither depends on its past $(\xi_1,\ldots,\xi_{t-1})$ nor on $D_t$
and $D_t$ only depends on $D_{t-1}$, we detail in Section \ref{sec:sddp} the SDDP algorithm to solve the dynamic programming equations written in Section \ref{dpequations}.
This variant of SDDP, called SDDP-TSto, is very similar to the variants of SDDP presented in \cite{philpmatos}
where the underlying stochastic process depends on a Markov Chain (process $(D_t)$ in our case) and a value function is used 
for each stage and in each state of the Markov chain.
Finally, in Section \ref{sec:numexp}, we consider a portfolio problem with a random optimization period and
the corresponding dynamic programming equations, given in Section \ref{dpmoregeneral}. 
We detail the SDDP algorithm applied to these equations and
present the results of numerical tests which compare for several instances the performance of a policy that takes the randomness of the number of stages 
into account with the performance of a policy built taking a fixed value for the number of stages, namely the maximal possible value.

\section{Writing dynamic programming equations for multistage stochastic programs with a random number of stages}\label{dpequations}

Consider a  risk-neutral  multistage stochastic optimization problem with $T_{\max}$ known stages of form
\begin{equation} \label{pbsto}
\begin{array}{l}
\displaystyle{\inf} \; \mathbb{E}_{\xi_2,\ldots,\xi_{T_{\max}}}\Big[ \displaystyle{\sum_{t=1}^{T_{\max}}} \; f_{t} (x_{t}, x_{t-1}, \xi_t)\Big]\\
x_t \in X_t( x_{t-1}, \xi_t )\;\mbox{a.s.},  x_{t} \;\mathcal{F}_t\mbox{-measurable, }t=1,\ldots,T_{\max},
\end{array}
\end{equation}
where  $x_0$ is given, $\xi_1$ is deterministic,  $(\xi_t)_{t=2}^{T_{\max}}$ is a stochastic process, $\mathcal{F}_t$ is the sigma-algebra
$\mathcal{F}_t:=\sigma(\xi_j, j\leq t)$, and $X_t( x_{t-1}, \xi_t )$ is a subset of $\mathbb{R}^n$. In the objective, the expectation is computed with respect to the distribution
of $\xi_2,\ldots,\xi_{T_{\max}}$.
We assume that the problem above is well defined. We will come back in Section \ref{sec:sddp}
to the necessary assumptions for problem \eqref{pbsto} to be well defined and to apply SDDP as a solution method.

Our goal in this section is to define multistage stochastic optimization problems where the number of stages is not fixed
($T_{\max}$ in \eqref{pbsto}) anymore but is stochastic, and to derive dynamic programming equations, under several
assumptions, for such problems.

We will assume that
\begin{itemize}
\item[(H1)] the number of stages $T$ is a discrete random variable taking values in $\{2,\ldots,T_{\max}\}$.
\end{itemize}
The number of stages $T$, or stopping time, induces the Bernoulli process
$D_t,\;t=1,\ldots,T_{\max}$ (a ``Death" process), where $D_t=\mathbbm{1}_{T>t}$ is the indicator
of the event $\{T >t\}$:
\begin{equation}\label{indicatordef}
D_t=\mathbbm{1}_{T>t} = \left\{
\begin{array}{ll}
0 & \mbox{if the optimization period ended at }t \mbox{ or before},\\
1 & \mbox{otherwise.}\\
\end{array}
\right.
\end{equation}
Therefore $T$ can be written as the following function of process $(D_t)$:
\begin{equation}\label{TfuncDt}
T= \min \Big\{ 1 \leq t  \leq T_{\max} : D_t = 0  \Big\}. 
\end{equation}
Clearly, $D_t, t=1,\ldots,T_{\max}$, are dependent random variables and
the distribution of $D_t$ given $D_{t-1}$  is known as long as the distribution of $T$ is known.
More precisely, since we have at least 2 stages, $D_1$ takes value 1 with probability 1.
Next, denoting $p_t = \mathbb{P}(T=t)$ and
$q_t = \mathbb{P}( D_t = 0 | D_{t-1} = 1 )$, we have
$q_2 = \mathbb{P}(T=2)=p_2$,
and for $t \in \{2,\ldots,T_{\max}\}$ we get
$$
p_t  =  \mathbb{P}(T=t)=\mathbb{P}(D_2 = 1 ; D_3 = 1; \ldots; D_{t-1}= 1 ; D_t =0 )
 =  q_t \prod_{k=2}^{t-1} (1-q_k).
$$
Therefore transition probabilities $q_t,t=2,3,\ldots,T_{\max}$, can be computed using the recurrence
\begin{equation}\label{qtformula}
\displaystyle q_t =  \displaystyle \frac{p_t}{\displaystyle \prod_{k=2}^{t-1} (1 - q_k )},t=3,\ldots,T_{\max},
\end{equation}
starting from $q_2=p_2$ (note that $q_{T_{\max}}=1$). 

By definition of $D_t$, we also have $\mathbb{P}(D_t=0 | D_{t-1} = 0 ) =1$
or equivalently $\mathbb{P}(D_t=1 | D_{t-1} = 0 ) =0$.

We represent on the top left plot of Figure \ref{figs1} 
the scenario tree of the realizations of $D_1,D_2,\ldots,D_{T_{\max}}$ (for an example where $T_{\max}=5$), as well as the transition probabilities
between the nodes of this scenario tree. In this scenario tree, realizations are indicated at the nodes
of the tree while transition probabilities between two nodes are given above the arrow linking these nodes.
Observe that for a node with label 0 all future transition probabilities are 1 and all descendant nodes
have label 0.

\begin{figure}
\centering
\begin{tabular}{l}
\input{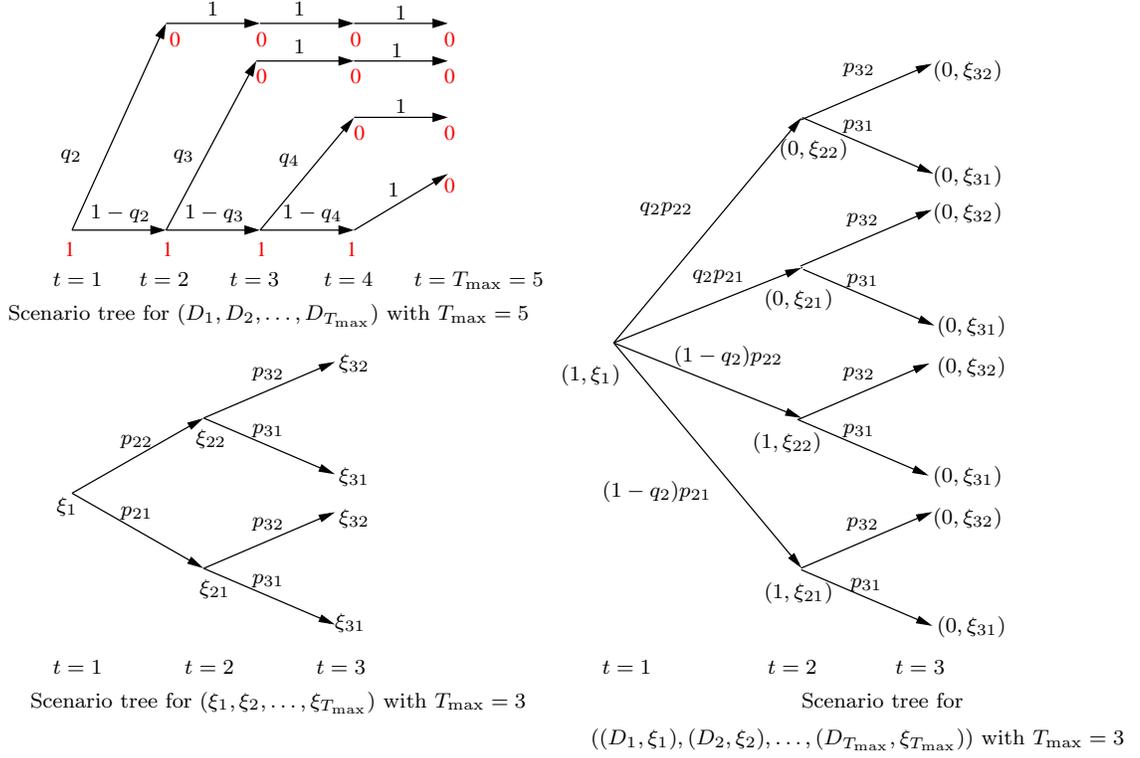}
\end{tabular}
\caption{Scenario trees (assuming that $\xi_t$ does not depend on $(\xi_{[t-1]}, D_t)$).}
\label{figs1}
\end{figure}

In the case when the number of stages is stochastic, the decision $x_t$ for stage $t$
is not only a function of the history $\xi_{[t]}=(\xi_1,\xi_2,\ldots,\xi_t)$ of process $(\xi_t)$, as in \eqref{pbsto},
but also depends on the history of process $(D_t)$. Therefore, we come to the following definition of a risk-neutral
multistage stochastic optimization problem with a random number $T$ of stages:
\begin{equation} \label{defpbstobis}
\begin{array}{l}
\displaystyle{\inf} \; \mathbb{E}_{\xi_2,\ldots,\xi_{T_{\max}},D_2,\ldots,D_{T_{\max}}}\Big[ \displaystyle{\sum_{t=1}^{T}} \; f_{t} (x_{t}, x_{t-1}, \xi_t)\Big]\\
x_t \in X_t( x_{t-1}, \xi_t ) \mbox{ a.s.},\;  x_{t} \;{\overline{\mathcal{F}}}_t\mbox{-measurable, }t=1,\ldots,T_{\max},
\end{array}
\end{equation}
where ${\overline{\mathcal{F}}}_t$ is the sigma-algebra 
\begin{equation}\label{deffttr}
{\overline{\mathcal{F}}}_t =\sigma(\xi_j,D_j,j\leq t)
\end{equation}
and where $T$ is the function of $(D_t)$ given by \eqref{TfuncDt}. Note that in the objective of \eqref{defpbstobis} the expectation
is computed with respect to the distribution of $\xi_2,\ldots,\xi_{T_{\max}},D_2,\ldots,D_{T_{\max}}$.
Plugging  \eqref{TfuncDt} into \eqref{defpbstobis}, problem \eqref{defpbstobis} can be written 
\begin{equation} \label{defpbstoter}
\begin{array}{l}
\displaystyle{\inf} \; \mathbb{E}_{\xi_2,\ldots,\xi_{T_{\max}},D_2,\ldots,D_{T_{\max}}}\Big[ \displaystyle{\sum_{1 \leq t \leq \min \{ 1 \leq \tau  \leq T_{\max} : D_{\tau} = 0  \}}} \; f_{t} (x_{t}, x_{t-1}, \xi_t)\Big]\\
x_t \in X_t( x_{t-1}, \xi_t )\;\mbox{a.s.},\;  x_{t} \;{\overline{\mathcal{F}}}_t\mbox{-measurable, }t=1,\ldots,T_{\max}.
\end{array}
\end{equation}
To write dynamic programming equations for \eqref{defpbstoter} 
we now define the state vectors.
The state vector at stage $t$ is given by $x_{t-1}$ (decision taken at the previous stage)
and the relevant history of processes $(\xi_t)$ and $(D_t)$.
Though all the history $D_{[t-1]}=(D_1,\ldots,D_{t-1})$ of process $(D_t)$ until stage $t-1$ may be necessary,
we argue that it is enough to put in the state vector for stage $t$ past value
$D_{t-1}$ of $(D_t)$. 
Indeed, 
\begin{itemize}
\item if $D_{t-1}=1$ then the whole history of $(D_t)$ until $t-1$ is known: we know
that $D_j=1$ for $1 \leq j \leq t-1$;
\item on the other hand, if $D_{t-1}=0$ then  
whatever the history of $(D_t)$ until $t-1$, we know that the cost function is
null for stage $t$ because the optimization period ended at $t-1$ or before.
\end{itemize}

Consequently the state vector at stage $t$ is $(x_{t-1}, \xi_{[t-1]}, D_{t-1})$ and we introduce for each stage  $t=2,\ldots,T_{\max}$, two functions:
\begin{itemize}
\item $\mathfrak{Q}_t$ such that  $\mathfrak{Q}_t(  x_{t-1}, \xi_{[t-1]}, D_{t-1} , \xi_t    , D_t     )$
is the optimal mean cost from $t$ on starting at $t$ from state $(x_{t-1}, \xi_{[t-1]}, D_{t-1})$ and knowing the values 
$\xi_t$ and $D_t$ of processes $(\xi_t)$ and $(D_t)$ at $t$;
\item $\mathcal{Q}_{t}$ given by  
\begin{equation}\label{dp4}
\mathcal{Q}_{t}(x_{t-1}, \xi_{[t-1]}, D_{t-1} )=\mathbb{E}_{\xi_t, D_t } \Big[ \mathfrak{Q}_t (  x_{t-1}, \xi_{[t-1]}, D_{t-1} , \xi_t    , D_t ) |  D_{t-1} , \xi_{[t-1]}   \Big],
\end{equation}
i.e.,  $\mathcal{Q}_{t}(x_{t-1}, \xi_{[t-1]}, D_{t-1})$ is  
the optimal mean cost from $t$ on
 starting at $t$ from state $(x_{t-1}, \xi_{[t-1]}, D_{t-1})$.
\end{itemize}
We also set $\mathcal{Q}_{T_{\max}+1}(x_{T_{\max}}, \xi_{[T_{\max}]}, D_{T_{\max}}) \equiv 0$. With these definitions, clearly 
for $t=2,\ldots, T_{\max}$,
we have 
\begin{equation}\label{dp0}
\mathcal{Q}_{t}(x_{t-1}, \xi_{[t-1]}, 0 )=0.
\end{equation}
Next, for $t=2,\ldots, T_{\max}$, functions $\mathcal{Q}_{t}(\cdot, \cdot, 1 )$
satisfy the recurrence 
\begin{equation}\label{dp1}
\mathcal{Q}_{t}(x_{t-1}, \xi_{[t-1]}, 1 )=\mathbb{E}_{\xi_t, D_t } \Big[ \mathfrak{Q}_t (  x_{t-1}, \xi_{[t-1]}, 1 , \xi_t    , D_t ) |  D_{t-1}=1 , \xi_{[t-1]}   \Big]
\end{equation}
where 
\begin{equation}\label{dp2}
\mathfrak{Q}_t(  x_{t-1}, \xi_{[t-1]}, 1 , \xi_t    , 0     ) = 
\left\{ 
\begin{array}{l}
\inf_{x_t} \;f_t(x_t, x_{t-1}, \xi_t ) \\ 
x_t \in X_t(x_{t-1}, \xi_t ),
\end{array}
\right.
\end{equation}
and 
\begin{equation}\label{dp3}
\mathfrak{Q}_t(  x_{t-1}, \xi_{[t-1]}, 1 , \xi_t    , 1     ) = 
\left\{ 
\begin{array}{l}
\inf_{x_t} \;f_t(x_t, x_{t-1}, \xi_t ) + \mathcal{Q}_{t+1}(x_{t}, \xi_{[t-1]}, \xi_t , 1 ) \\ 
x_t \in X_t(x_{t-1}, \xi_t ). 
\end{array}
\right.
\end{equation}
The reasons for equations \eqref{dp1}-\eqref{dp3} are clear: 
\begin{itemize}
\item $\mathfrak{Q}_t(  x_{t-1}, \xi_{[t-1]}, 1 , \xi_t    , 0     )$
is the optimal mean cost from stage $t$ on, knowing that the optimization period ends at $t$
and that $\xi_t$ is the value of process $(\xi_t)$ at stage $t$. Therefore it is obtained minimizing the immediate 
stage $t$ cost while satisfying the constraints for stage $t$.
\item $\mathfrak{Q}_t(  x_{t-1}, \xi_{[t-1]}, 1 , \xi_t    , 1     )$
is the optimal mean cost from stage $t$ on, knowing that the optimization period continues after $t$
and that $\xi_t$ is the value of process $(\xi_t)$ at stage $t$. Therefore it is obtained minimizing the immediate 
stage $t$ cost plus the future optimal mean cost (which is $\mathcal{Q}_{t+1}(x_{t}, \xi_{[t]}, D_t )
=\mathcal{Q}_{t+1}(x_{t}, \xi_{[t-1]}, \xi_t , 1 )$  since $D_{t}=1$)
while satisfying the constraints for stage $t$.
\end{itemize}
We observe that  equations \eqref{dp0}-\eqref{dp3} can be written under the  following compact form: for $t=2,\ldots, T_{\max}$,
\begin{equation}\label{dp4}
\mathcal{Q}_{t}(x_{t-1}, \xi_{[t-1]}, D_{t-1} )=\mathbb{E}_{\xi_t, D_t } \Big[ \mathfrak{Q}_t (  x_{t-1}, \xi_{[t-1]}, D_{t-1} , \xi_t    , D_t ) |  D_{t-1} , \xi_{[t-1]}   \Big]
\end{equation}
where 
\begin{equation}\label{dp5}
\mathfrak{Q}_t(  x_{t-1}, \xi_{[t-1]}, D_{t-1} , \xi_t    , D_{t}     ) = 
\left\{ 
\begin{array}{l}
\inf_{x_t} \;D_{t-1} f_t(x_t, x_{t-1}, \xi_t ) + \mathcal{Q}_{t+1}( x_{t}, \xi_{[t]}, D_t  )   \\ 
x_t \in X_t(x_{t-1}, \xi_t ).
\end{array}
\right.
\end{equation}
Setting $D_0=1$, recalling that $D_1=1$ and that ${\overline{\mathcal{F}}}_t$ is given by \eqref{deffttr}, 
it is straighforwardly seen that the optimal value of \eqref{defpbstoter} can be expressed as
\begin{equation}\label{optvalTstomu}
\begin{array}{l}
\displaystyle{\inf}_{x_1} \; D_{0} f_{1} (x_{1}, x_{0}, \xi_1) +\mathcal{Q}_2(x_1, \xi_1 , D_1 )  \\
x_1 \in X_1( x_{0}, \xi_1 ),
\end{array}
\end{equation}
and that \eqref{dp4}-\eqref{dp5} are dynamic programming equations for the problem 
\begin{equation} \label{reformpb1}
\begin{array}{l}
\displaystyle{\inf} \; \mathbb{E}_{\xi_2,\ldots,\xi_{T_{\max}, D_2,\ldots,D_{T_{\max}}}}\Big[ \displaystyle{\sum_{t=1}^{T_{\max}}} \; D_{t-1} f_{t} (x_{t}, x_{t-1}, \xi_t)\Big]\\
x_t \in X_t( x_{t-1}, \xi_t )\;\mbox{a.s.},\;x_{t} \;{\overline{\mathcal{F}}}_t \mbox{-measurable, }t=1,\ldots,T_{\max},
\end{array}
\end{equation}
which is an equivalent reformulation of  \eqref{defpbstoter}.

The impact of the randomness of $T$ on the dynamic programming equations is clear from reformulation \eqref{reformpb1}
of problem \eqref{defpbstoter}. In this reformulation, the number of stages is fixed and known: it is the maximal possible
number of stages $T_{\max}$ for $T$. Therefore, it takes the form of a ``usual" multistage stochastic optimization problem
where random variable $T$ was replaced by the interstage dependent random process $(D_t)$ and the cost
function $f_t$ at stage $t$ was replaced   by the random cost
function $D_{t-1} f_t$. Indeed, when the optimization period ended at $t-1$ or before, the 
cost function is null for stage
$t$ or equivalently can be expressed as $D_{t-1} f_t$ since $D_{t-1}=0$ in this case.
On the other hand, if the optimization period did not end at $t-1$ then 
$D_{t-1}=1$  and again the cost function for stage $t$ can be expressed as $D_{t-1} f_t$ ($= f_t$ in this case). 

Note that in these equations, $(\xi_t, D_t)$ can depend on $\xi_{[t-1]}$.
Clearly $D_t$ depends on $D_{t-1}$ but $(\xi_t, D_t)$ can be independent  on $\xi_{[t-1]}$.
In this situation, $\xi_{[t-1]}$ is not needed in the state vector at $t$ and the dynamic programming equations
simplify as follows: $\mathcal{Q}_{T_{\max}+1}(x_{T_{\max}}  ,   D_{T_{\max}}     ) \equiv 0$ and for $t=2,\ldots,T_{\max}$, we have 
\begin{equation}\label{dp6}
\mathcal{Q}_{t}(x_{t-1}, D_{t-1} )=\mathbb{E}_{\xi_t, D_t } \Big[ \mathfrak{Q}_t (  x_{t-1}, D_{t-1} , \xi_t    , D_t ) |  D_{t-1}   \Big]
\end{equation}
where 
\begin{equation}\label{dp7}
\mathfrak{Q}_t(  x_{t-1}, D_{t-1} , \xi_t    , D_{t}     ) = 
\left\{ 
\begin{array}{l}
\inf_{x_t} \;D_{t-1} f_t(x_t, x_{t-1}, \xi_t ) + \mathcal{Q}_{t+1}( x_{t}, D_t  )   \\ 
x_t \in X_t(x_{t-1}, \xi_t ).
\end{array}
\right.
\end{equation}

Finally, let us consider the case when $\xi_t$ does not depend on $(\xi_{[t-1]}, D_t )$
and $D_t$ only depends on $D_{t-1}$. In this setting, $(D_t)$ is an inhomogeneous Markov chain
with two states: an absorbing  state corresponding to the case when the optimization period is over and a
second state where the optimization period is still not over.
We assume that the distribution of $\xi_t$ is discrete with finite support $\{\xi_{t 1}, \ldots,\xi_{t M_t}\}$ with
$p_{t j}=\mathbb{P}(\xi_t = \xi_{t j})$.
The scenario trees for $(\xi_1,\ldots,\xi_{T_{\max}})$
and $((D_1, \xi_1), (D_2, \xi_2), \ldots,(D_{T_{\max}}, \xi_{T_{\max}}))$ (nodes and transition probabilities)
are represented in Figure \ref{figs1} (right and bottom left plots) on an example where $T_{\max}=3$ and where $\xi_t$ has two possible realizations 
for all $t=2,\ldots,T_{\max}$.

With these assumptions  dynamic programming equations
\eqref{dp6}-\eqref{dp7} can be written as follows: $\mathcal{Q}_{T_{\max}+1}(x_{T_{\max}}  ,   D_{T_{\max}}     ) \equiv 0$,
\begin{equation}\label{dp1simp0}
\mathcal{Q}_{t}(x_{t-1}, 0 )=0,\;t=2,\ldots,T_{\max}, 
\end{equation}
and 
\begin{equation}\label{dp1simp}
\mathcal{Q}_t( x_{t-1}, 1 ) = (1-q_t) \sum_{j=1}^{M_{t}} p_{t j} \mathfrak{Q}_t ( x_{t-1}, 1, \xi_{t j}, 1 ) +
q_t \sum_{j=1}^{M_t} p_{t j} \mathfrak{Q}_t ( x_{t-1}, 1, \xi_{t j}, 0 ),
\end{equation}
where $q_t$ is given by \eqref{qtformula},
\begin{equation}\label{dp1simpbis1}
\mathfrak{Q}_t ( x_{t-1}, 1, \xi_{t j}, 1 ) = 
\left\{ 
\begin{array}{l}
\inf_{x_t} \;f_t(x_t, x_{t-1}, \xi_{t j} ) + \mathcal{Q}_{t+1}( x_{t}, 1  )   \\ 
x_t \in X_t(x_{t-1}, \xi_{t j} ),
\end{array}
\right.
\end{equation}
and
\begin{equation}\label{dp1simpbis2}
\mathfrak{Q}_t ( x_{t-1}, 1, \xi_{t j}, 0 ) =   
\left\{ 
\begin{array}{l}
\inf_{x_t} \;f_t(x_t, x_{t-1}, \xi_{t j} )    \\ 
x_t \in X_t(x_{t-1}, \xi_{t j} ).
\end{array}
\right.
\end{equation}
\begin{rem} Observe that the dynamic programming equations above correspond to a model that minimizes the expected cost 
with respect to the distribution of $(\xi_1,\xi_2,\ldots,\xi_{T_{\max}},D_1,D_2,\ldots,D_{T_{\max}})$. From the Law of Large Numbers,
this model is useful  when the corresponding policy is 
repeatedly applied by individuals sharing the same  
distribution of the number of stages $T$. An example would be a group of companies sharing the same distribution for their lifetime, see \cite{lifetimecompanies2015} and Section \ref{sec:numresults}.
\end{rem}

\section{Dynamic programming equations for more general models}\label{dpmoregeneral}

\subsection{Writing dynamic programming equations}\label{moregendp}

In the previous section, we considered models where the cost functions for stages 
$t \leq T$ are taken from the collection of functions $(f_t)$, namely $f_t$
for stage $t$ as long as $t \leq T$. It is possible to write dynamic programming equations for more 
general risk-neutral stochastic programming models having a random number $T$ of stages 
where the cost function for stage $t$ is taken from collection of functions 
$(f_{t,1},f_{t,2},f_{t,3},\ldots,)$ as follows: there is  some random variable $T_1$ such that
for $1 \leq t \leq \min(T_1 , T)$ the cost function is $f_{t,1}$;
next there is some random variable $T_2$ such that
for $\min(T_1 , T)+1 \leq t \leq \min(T_1 +T_2 , T)$ the cost function is $f_{t,2}$, and so on...

As a special case, assume that $T_1 = T-1$: for $t=1,\ldots,T-1$, the cost function
is $f_t(x_t,x_{t-1},\xi_t)$ and for $t=T$ the cost function is ${\overline{f}}_t(x_t,x_{t-1},\xi_t)$.
In this situation, recalling definition \eqref{indicatordef} of $D_t$, the cost function for stage $t$ can be written
\begin{equation}\label{costfunction}
D_{t} f_t(x_t, x_{t-1}, \xi_t ) + ( D_{t-1} - D_t ) {\overline{f}}_t(x_t, x_{t-1}, \xi_t ).
\end{equation}
Indeed, 
\begin{itemize}
\item if $t<T$ we have $D_{t-1}=D_t=1$ and 
$D_{t} f_t(x_t, x_{t-1}, \xi_t ) + ( D_{t-1} - D_t ) {\overline{f}}_t(x_t, x_{t-1}, \xi_t )=f_t(x_t, x_{t-1}, \xi_t )$;
\item if $t=T$ we have $D_{t-1}=1, D_t =0$, and 
$D_{t} f_t(x_t, x_{t-1}, \xi_t ) + ( D_{t-1} - D_t ) {\overline{f}}_t(x_t, x_{t-1}, \xi_t )={\overline f}_t(x_t, x_{t-1}, \xi_t )$;
\item if $t>T$ no costs are incurred, we have $D_{t-1}=D_t=0$ and 
$D_{t} f_t(x_t, x_{t-1}, \xi_t ) + ( D_{t-1} - D_t ) {\overline{f}}_t(x_t, x_{t-1}, \xi_t )=0$;
\end{itemize}
as required.
In the next section we present a simple portfolio problem modelled by a MSP of this type. Therefore, for the ``special case"
we are dealing with now, with cost function \eqref{costfunction} for stage $t$, we obtain the multistage
stochastic program
\begin{equation} \label{reformpb1bis}
\begin{array}{l}
{\inf} \; \mathbb{E}_{\xi_2,\ldots,\xi_{T_{\max}},D_2,\ldots,D_{T_{\max}}}\Big[ \displaystyle{\sum_{t=1}^{T_{\max}}} \; D_{t} f_t(x_t, x_{t-1}, \xi_t ) + ( D_{t-1} - D_t ) {\overline{f}}_t(x_t, x_{t-1}, \xi_t )\Big]\\
x_t \in X_t( x_{t-1}, \xi_t )\;\mbox{a.s.}, x_{t} \;{\overline{\mathcal{F}}}_t\mbox{-measurable, }t=1,\ldots,T_{\max},
\end{array}
\end{equation}
where  ${\overline{\mathcal{F}}}_  t$ is the sigma-algebra given by \eqref{deffttr}.
 Observe that when $${\overline{f}}_t(x_t, x_{t-1}, \xi_t )=f_t(x_t, x_{t-1}, \xi_t ),$$ we are back to 
the stochastic programs considered in the previous section, i.e., problem  
\eqref{reformpb1bis} becomes problem \eqref{reformpb1}. 
Clearly, \eqref{reformpb1bis} is obtained replacing in \eqref{reformpb1} the cost function $D_{t} f_t(x_t, x_{t-1}, \xi_t )$
for stage $t$ by \eqref{costfunction}.
Therefore dynamic programming equations for \eqref{reformpb1bis} are obtained updating correspondingly  the cost functions in 
dynamic programming equations \eqref{dp4}-\eqref{dp5} for \eqref{reformpb1}, i.e., the dynamic programming equations for \eqref{reformpb1bis} are: 
$\mathcal{Q}_{T_{\max}+1}(x_{T_{\max}}  , \xi_{[T_{\max}]},  D_{T_{\max}}  ) \equiv 0$ and for $t=2,\ldots,T_{\max}$,
\begin{equation}\label{dp4bis}
\mathcal{Q}_{t}(x_{t-1}, \xi_{[t-1]}, D_{t-1} )=\mathbb{E}_{\xi_t, D_t } \Big[ \mathfrak{Q}_t (  x_{t-1}, \xi_{[t-1]}, D_{t-1} , \xi_t    , D_t ) |  D_{t-1} , \xi_{[t-1]}   \Big]
\end{equation}
where 
{\small{
\begin{equation}\label{dp5bis}
\mathfrak{Q}_t(  x_{t-1}, \xi_{[t-1]}, D_{t-1} , \xi_t    , D_{t}     ) = 
\left\{ 
\begin{array}{l}
\inf_{x_t} \;D_{t} f_t(x_t, x_{t-1}, \xi_t ) + ( D_{t-1} - D_t ) {\overline{f}}_t(x_t, x_{t-1}, \xi_t )  + \mathcal{Q}_{t+1}( x_{t}, \xi_{[t]}, D_t  )   \\ 
x_t \in X_t(x_{t-1}, \xi_t ).
\end{array}
\right.
\end{equation}
}}
Now assume that $\xi_t$ does not depend on $(\xi_{[t-1]}, D_t )$, $D_t$ only depends on $D_{t-1}$,
and the distribution of $\xi_t$ is discrete with finite support $\{\xi_{t 1}\ldots,\xi_{t M_t}\}$.
Denoting $p_{t j}=\mathbb{P}(\xi_t = \xi_{t j})$, equations \eqref{dp4bis}-\eqref{dp5bis} simplify
as follows: $\mathcal{Q}_{T_{\max}+1}(x_{T_{\max}}, 0 )=\mathcal{Q}_{T_{\max}+1}(x_{T_{\max}}, 1 )  \equiv 0$, 
for $t=2,\ldots,T_{\max}$, $\mathcal{Q}_{t}(x_{t-1}, 0 ) \equiv 0$, and  for $t=2,\ldots,T_{\max},$ we have
\begin{equation}\label{dp1simpter}
\mathcal{Q}_t( x_{t-1}, 1 ) = (1-q_t) \sum_{j=1}^{M_{t}} p_{t j} \mathfrak{Q}_t ( x_{t-1}, 1, \xi_{t j}, 1 ) +
q_t \sum_{j=1}^{M_t} p_{t j} \mathfrak{Q}_t ( x_{t-1}, 1, \xi_{t j}, 0 ),
\end{equation}
where $q_t$ is given by \eqref{qtformula},
\begin{equation}\label{dp7ter}
\mathfrak{Q}_t(  x_{t-1}, 1 , \xi_{t j}    , 1     ) = 
\left\{ 
\begin{array}{l}
\inf_{x_t} \;f_t(x_t, x_{t-1}, \xi_{t j} ) + \mathcal{Q}_{t+1}( x_{t}, 1  )    \\ 
x_t \in X_t(x_{t-1}, \xi_{t j} ),
\end{array}
\right.
\end{equation}
and
\begin{equation}\label{dp7ter}
\mathfrak{Q}_t(  x_{t-1}, 1 , \xi_{t j}    , 0     ) = 
\left\{ 
\begin{array}{l}
\inf_{x_t} \; {\overline{f}}_t(x_t, x_{t-1}, \xi_{t j} )    \\ 
x_t \in X_t(x_{t-1}, \xi_{t j} ).
\end{array}
\right.
\end{equation}

\subsection{Example: a simple portfolio problem} \label{simpleportpb}

We consider the portfolio selection problem with direct transaction costs given in \cite{guilejtekregsddp}. 
When the number of stages is random we obtain a problem from the class of problems introduced in the previous Section \ref{moregendp}.
We first recall the dynamic programming equations for this model when the number of stages $T_{\max}$ is fixed and known.
We refer to Table \ref{tableparamdbv} for the list of corresponding parameters and decision variables.
\begin{table}
\begin{center}
\begin{tabular}{|l|c|c|}
\hline
\multirow{4}{*}{Decision variables} & $x_t(i)$ & Dollar value of risky asset $i$ at end of $t$  \\
\cline{2-3}
& $y_t(i)$ & Amount (in dollars) of risky asset $i$ sold at end of $t$\\
\cline{2-3}
& $z_t(i)$ & Amount (in dollars) of risky asset $i$ bought at end of $t$ \\
\cline{2-3}
& $x_t(n+1)$ & Amount (in dollars) held in cash at end of $t$\\
\hline
Stochastic parameters & $\xi_t(i)$ & Return of risky asset $i$ for stage $t$\\
\hline
\multirow{5}{*}{Deterministic parameters} & $\xi_t(n+1)$ & Cash return for stage $t$  \\
\cline{2-3}
&$u(i)$& Maximal proportion of capital invested in risky asset $i$ \\
\cline{2-3}
&$\eta_t(i)$& Selling transaction cost for risky asset $i$ stage $t$ \\
\cline{2-3}
&$\nu_t(i)$& Buying transaction cost for risky asset $i$ stage $t$ \\
\cline{2-3}
&$x_0$& Initial portfolio  \\
\hline
\end{tabular}
\end{center}
\caption{Parameters and decision variables for the portfolio problem.}
\label{tableparamdbv}
\end{table}
Let $x_t( i )$ be the dollar value of asset $i=1,\ldots,n+1$, at the end of stage $t=1,\ldots,T_{\max}$,
where asset $n+1$ is cash; let $\xi_t(i)$ be the return of asset $i$ at $t$; 
let $y_t(i)$ be the amount of asset $i$ sold at the end of $t$; 
let $z_t(i)$ be the amount of asset $i$ bought at the end of $t$ with
$\eta_t(i) > 0$ and $\nu_t(i) > 0$ the respective proportional selling and buying transaction costs at $t$.
Each component $x_0(i),i=1,\ldots,n+1$, of $x_0$ is known.
The budget available at the beginning of the investment period is
$\sum_{i=1}^{n+1} \xi_{1}(i) x_{0}(i)$ and
$u(i)$ represents the maximal proportion of capital that can be invested in asset $i$. 
For $t=1,\ldots,T_{\max}$, given a portfolio $x_{t-1}=(x_{t-1}(1),\ldots,x_{t-1}(n), x_{t-1}({n+1}))$ and $\xi_t$, we define the set $X_t(x_{t-1}, \xi_t)$
as the set of portfolios $(x_t, y_t, z_t) \in \mathbb{R}^{n+1} \small{\times} \mathbb{R}^{n} \small{\times} \mathbb{R}^{n}$ satisfying
$$
\begin{array}{l}
x_t( n+1 ) = \xi_{t}( n+1 ) x_{t-1}( n+1 )   +\sum\limits_{i=1}^{n} \Big((1-\eta_t( i) )y_t( i )-  (1+\nu_t ( i ))z_t( i )\Big),\\
x_{t}(i)= \xi_{t}(i) x_{t-1}(i)-y_t(i) +z_t(i), i=1,\ldots,n,\\
x_t(i) \leq u(i)  \sum\limits_{j=1}^{n+1} \xi_{t}(j) x_{t-1}(j),i=1,\ldots,n,\\
x_t(i) \geq 0, y_t(i) \geq 0, z_t(i) \geq 0,i=1,\ldots,n.
\end{array}
$$
With this notation, the following dynamic programming equations of a risk-neutral portfolio model can be written:
for $t=T_{\max}$, we solve the problem
\begin{equation}\label{eq1dp}
\mathfrak{Q}_{T_{\max}} \left( x_{T_{\max}-1}, \xi_{T_{\max}} \right)=
\left\{
\begin{array}{l}
\inf \; {\overline{f}}_{T_{\max}}(x_{T_{\max}}, x_{T_{\max}-1}, \xi_{T_{\max}} ) :=   -  \mathbb{E}\Big[ \sum\limits_{i=1}^{n+1} \xi_{T_{\max}+1}(i) x_{T_{\max}}(i) \Big]  \\
X_{T_{\max}} \in X_{T_{\max}}(x_{T_{\max}-1}, \xi_{T_{\max}}),
\end{array}
\right.
\end{equation}
while at stage $t=T_{\max}-1,\dots,1$, we solve
\begin{equation}\label{eq2dp} 
\mathfrak{Q}_t\left( x_{t-1}, \xi_t  \right)=
\left\{
\begin{array}{l}
\inf  \;  Q_{t+1}\left( x_{t} \right) \\
x_t \in X_t( x_{t-1} , \xi_{t}),
\end{array}
\right.
\end{equation}
where
\begin{equation}\label{eq3dpok}
\mathcal{Q}_t(x_{t-1})=\mathbb{E}_{\xi_t}[ \mathfrak{Q}_t\left( x_{t-1}, \xi_t  \right) ],\;t=2,\ldots,T_{\max}.
\end{equation}
Now for $t=1,\ldots,T_{\max}$, define 
\begin{equation}\label{fandfbar}
f_t(x_t,x_{t-1},\xi_t) \equiv 0 \mbox{ and }
{\overline{f}}_{t} (x_{t}, x_{t-1}, \xi_{t}) = -\mathbb{E}\Big[ \sum \limits_{i=1}^{n+1} \xi_{t+1}(i) x_{t}(i) \Big].
\end{equation}
Since the number of stages is fixed to $T_{\max}$ then $D_t=1,t=1,\ldots,T_{\max}-1, D_{T_{\max}}=0$ almost surely, and the porfolio
problem we have just described is of form \eqref{reformpb1bis} with $f_t, {\overline{f}}_t$ as in \eqref{fandfbar} and
$D_t=1,t=1,\ldots,T_{\max}-1, D_{T_{\max}}=0$ almost surely, i.e., we obtain the
portfolio problem
\begin{equation} \label{reformpb1bister}
\begin{array}{l}
{\inf} \; \mathbb{E}_{\xi_2,\ldots,\xi_{T_{\max}}}\Big[ {\overline{f}}_{T_{\max}} (x_{T_{\max}}, x_{T_{\max}-1}, \xi_{T_{\max}}) \Big]\\
x_t \in X_t( x_{t-1}, \xi_t )\;\mbox{a.s.}, x_{t} \;\mathcal{F}_t\mbox{-measurable, }t=1,\ldots,T_{\max}.
\end{array}
\end{equation}
With this model, we minimize the expected loss of the portfolio (or equivalently maximize the mean income) taking into account the transaction costs,
non-negativity constraints, and bounds imposed on the different securities.

Now assume that  the number of stages is random with discrete distribution on $\{2,\ldots,T_{\max}\}$
and define $D_t$ by \eqref{indicatordef}. We obtain the portfolio problem \eqref{reformpb1bis} with $f_t, {\overline{f}}_t$ as in \eqref{fandfbar}.  
If $\xi_t$ does not depend on $(\xi_{[t-1]}, D_t )$, $D_t$ only depends on $D_{t-1}$,
and the distribution of $\xi_t$ is discrete with finite support $\{\xi_{t 1}\ldots,\xi_{t M_t}\}$,
denoting $p_{t j}=\mathbb{P}(\xi_t = \xi_{t j})$, we can write the following dynamic programming equations for the 
corresponding portfolio problem: $\mathcal{Q}_{T_{\max}+1}(x_{T_{\max}}, 0 )=\mathcal{Q}_{T_{\max}+1}(x_{T_{\max}}, 1 )  \equiv 0$, 
for $t=2,\ldots,T_{\max}$, $\mathcal{Q}_{t}(x_{t-1}, 0 ) \equiv 0$ and  for $t=2,\ldots,T_{\max},$ we have
\begin{equation}\label{dp1simpterport}
\mathcal{Q}_t( x_{t-1}, 1 ) = (1-q_t) \sum_{j=1}^{M_{t}} p_{t j} \mathfrak{Q}_t ( x_{t-1}, 1, \xi_{t j}, 1 ) +
q_t \sum_{j=1}^{M_t} p_{t j} \mathfrak{Q}_t ( x_{t-1}, 1, \xi_{t j}, 0 ),
\end{equation}
where $q_t$ is given by \eqref{qtformula},
\begin{equation}\label{dp7terport}
\mathfrak{Q}_t(  x_{t-1}, 1 , \xi_{t j}    , 1     ) = 
\left\{ 
\begin{array}{l}
\inf_{x_t} \mathcal{Q}_{t+1}( x_{t}, 1  )    \\ 
x_t \in X_t(x_{t-1}, \xi_{t j} ),
\end{array}
\right.
\end{equation}
and
\begin{equation}\label{dp7terport2}
\mathfrak{Q}_t(  x_{t-1}, 1 , \xi_{t j}    , 0     ) = 
\left\{ 
\begin{array}{l}
\inf_{x_t} \;  - \mathbb{E}[\xi_{t+1}^T x_t  ] \\ 
x_t \in X_t(x_{t-1}, \xi_{t j} ).
\end{array}
\right.
\end{equation}
In the case when the number of stages is $T_{\max}$ (deterministic) then
$q_t=0$ for $t=2,\ldots,T_{\max}-1$, $q_{T_{\max}}=1$ and dynamic programming equations 
\eqref{dp1simpterport}, \eqref{dp7terport}, \eqref{dp7terport2} become, as expected,
\eqref{eq1dp}, \eqref{eq2dp}, \eqref{eq3dpok} (with the notation $\mathcal{Q}_t(x_{t-1})$ instead
of $\mathcal{Q}_t(x_{t-1}, 1$), $\mathfrak{Q}_t(x_{t-1}, \xi_{t j})$ instead of $\mathfrak{Q}_t(x_{t-1}, 1,  \xi_{t j} , 1)$,
and $\mathfrak{Q}_{T_{\max}} (x_{T_{\max}-1}, \xi_{T_{\max}})$ instead of $\mathfrak{Q}_{T_{\max}}(x_{T_{\max}-1}, 1,  \xi_{T_{\max}} , 0)$).

\section{SDDP for multistage stochastic risk-neutral programs with a random number of stages}\label{sec:sddp}

\subsection{Assumptions}

Consider optimization problem \eqref{reformpb1} where $\xi_t$ does not depend on $(\xi_{[t-1]}, D_t )$
and $D_t$ only depends on $D_{t-1}$. We assume that the distributions of $T$ and $\xi_t$ are discrete:
the support of $T$ is
$\{2,\ldots,T_{\max}\}$ and the support of  $\xi_t$ is $\Theta_t = \{\xi_{t 1},\ldots,\xi_{t M_t} \}$ with
$p_{t i}= \mathbb{P}(\xi_t = \xi_{t i}) >0, i=1,\ldots,M_t$.
In this context, equations \eqref{dp1simp0}, \eqref{dp1simp}, \eqref{dp1simpbis1}, \eqref{dp1simpbis2} are the dynamic programming equations for \eqref{reformpb1}.
We can now apply Stochastic Dual Dynamic Programming (SDDP, \cite{pereira}), 
to solve these dynamic programming equations
as long as recourse functions $\mathcal{Q}_{t}(\cdot,1)$ are convex.
SDDP has been used to solve many real-life problems and several extensions of the method have been considered 
such as DOASA \cite{philpot}, CUPPS \cite{chenpowell99}, ReSA \cite{resa}, AND \cite{birgedono},
risk-averse (\cite{guiguesrom10, guiguesrom12, kozmikmorton, philpmatos, shapsddp, shadenrbook}) or inexact (\cite{guigues2016isddp}) variants;
see also  \cite{guiguescoap2013, morton} for adaptations to interstage dependent processes and \cite{shabbirzou} for extensions for
integer stochastic programs.

SDDP builds approximations for the cost-to-go functions which take the form of a maximum of
affine functions. 

To ensure convexity of functions $\mathcal{Q}_{t}(\cdot,1)$, we need convexity of functions 
$f_t(\cdot,\cdot,\xi_t)$ and of multifunctions
$X_t(\cdot,\xi_t)$  for almost every $\xi_t$. We will consider two settings: linear and nonlinear programs.\\

\par {\textbf{Linear problems.}} In this setting, $f_t(x_t,x_{t-1},\xi_t)=c_t^T x_t$ is linear,
\begin{equation}\label{xtlin}
X_t(x_{t-1}, \xi_t):=\{x_t \in \mathbb{R}^n : A_{t} x_{t} + B_{t} x_{t-1} = b_t, \,x_t \geq 0 \},
\end{equation}
and random vector $\xi_t$ corresponds to the concatenation of the elements in random matrices $A_t, B_t$ which have a known
finite number of rows and random vectors $b_t, c_t$. We assume:
\begin{itemize} 
\item[(H2-L)] The set $X_1(x_{0}, \xi_1 )$ is nonempty and bounded and for every $x_1 \in X_1(x_{0}, \xi_1 )$,
for every $t=2,\ldots,T$, for every realization $\tilde \xi_2, \ldots, \tilde\xi_t$ of $\xi_2,\ldots,\xi_t$,
for every $x_{\tau} \in X_{\tau}( x_{\tau-1} , \tilde \xi_{\tau}), \tau=2,\ldots,t-1$, the set $X_t( x_{t-1} , {\tilde \xi}_t )$
is nonempty and bounded.\\
\end{itemize}

\par {\textbf{Nonlinear problems.}} In this case,
\begin{equation}\label{xtnonlin}
X_t( x_{t-1} , \xi_t)= \{x_t \in \mathbb{R}^n : x_t \in \mathcal{X}_t,\;g_t(x_t, x_{t-1}, \xi_t) \leq 0,\;\;\displaystyle A_{t} x_{t} + B_{t} x_{t-1} = b_t \},
\end{equation}
and $\xi_t$ contains in particular the random elements in matrices $A_t, B_t$, and vector $b_t$. 
Of course, as a special case (and as is often the case in applications), the nonlinear problems we are interested in can have nonlinear cost and constraint functions for stage $t$ that do not depend on
$x_{t-1}$, namely of form $f_t(x_t, \xi_t)$ and $g_t(x_t, \xi_t)$.\\

We assume
that for $t=1,\ldots,T$, there exists $\varepsilon_t>0$ such that:\\
\par (H2-NL)-(a) $\mathcal{X}_t$ is nonempty, convex, and compact.
\par (H2-NL)-(b) For every $j=1,\ldots,M_t$, the function $f_t(\cdot, \cdot,\xi_{t j})$ is convex, lower semicontinuous, and finite on  $\mathcal{X}_t \small{\times} \mathcal{X}_{t-1}^{\varepsilon_t}$
where $\mathcal{X}_{t-1}^{\varepsilon_t}=\mathcal{X}_{t-1} + \{x \in \mathbb{R}^n : \|x\|_2 \leq \varepsilon_t \}$.
\par (H2-NL)-(c) for every $j=1,\ldots,M_t$, each component $g_{t, i}(\cdot, \cdot, \xi_{t j}), i=1,\ldots,p$, of the function \par$g_t(\cdot, \cdot, \xi_{t j})$ is 
convex, lower semicontinuous, and finite  on $\mathcal{X}_t \small{\times} \mathcal{X}_{t-1}^{\varepsilon_t}$.
\par (H2-NL)-(d) For every $j=1,\ldots,M_t$, for every $x_{t-1} \in \mathcal{X}_{t-1}^{\varepsilon_t}$, the set 
$X_t( x_{t-1}, \xi_{t j})$ is nonempty.
\par (H2-NL)-(e) If $t \geq 2$, for every $j=1,\ldots,M_t$, there exists
$${\bar x}_{t, j}=( {\bar x}_{t, j, t} , {\bar x}_{t, j, t-1} ) \in \mbox{ri}(\mathcal{X}_t) \small{\times}  \mathcal{X}_{t-1}
\cap \mbox{ri}(\{g_t(\cdot,\cdot,\xi_{t j})\leq 0\})$$ 
\par such that $\bar x_{t, j, t} \in X_t(\bar x_{t, j, t-1}, \xi_{t j})$.\\

Assumptions (H2-NL)-(a),(b),(c) in the nonlinear case imply the convexity of cost-to-go functions
$\mathcal{Q}_t(\cdot,1)$. The assumptions above also ensure (both in the linear and nonliner cases) that SDDP applied to dynamic programming equations \eqref{dp1simp}, \eqref{dp1simpbis1}, \eqref{dp1simpbis2} 
will converge, as long as samples in the forward passes are independent, see \cite{philpot, guiguessiopt2016, lecphilgirar12}  for details.

\subsection{Algorithm}\label{sec:algosddptsto}

We now describe the steps of SDDP applied to dynamic programming equations \eqref{dp1simp0}, \eqref{dp1simp}, \eqref{dp1simpbis1}, \eqref{dp1simpbis2}. We denote by SDDP-TSto this SDDP method
for solving \eqref{reformpb1}\footnote{TSto in acronym SDDP-TSto refers to the fact that this SDDP method solves stochastic programs with $T$ stochastic, $T$
being the number of stages.}. 
SDDP-TSto is similar to the variant of SDDP presented in \cite{philpmatos, mogrundt}
where the underlying stochastic process depends on a Markov Chain.\footnote{However, in \cite{philpmatos, mogrundt}, problems are linear whereas
we will detail SDDP-TSto both for linear and nonlinear stochastic programs.} This Markov chain (process $(D_t)$ for our DP equations) has only two states in our case.
The cost-to-go function is null in one of these states (when $D_{t-1}=0$) and the goal of SDDP-TSto is to approximate the cost-to-go function in the other state (when $D_{t-1}=1$) for all stages, i.e., cost-to-go functions
$\mathcal{Q}_{t}(\cdot,1),t=2,\ldots,T_{\max}$.

In the end of iteration $k$, the algorithm has computed
for 
cost-to-go functions $\mathcal{Q}_{t}(\cdot,1),t=2,\ldots,T_{\max}$,
the approximations $\mathcal{Q}_t^k(\cdot,1),t=2,\ldots,T_{\max}$, which are maximum of  $k+1$ affine functions called cuts:
\begin{equation}\label{sddpapprox}
\mathcal{Q}_t^k(x_{t-1},1) = \displaystyle \max_{0 \leq j \leq k} \;\theta_t^j + \langle \beta_t^j , x_{t-1} \rangle.
\end{equation}
At iteration $k$, a realization of the number of stages and a sample
for $(\xi_t)_{1 \leq t \leq T_{\max}}$, are generated.
Decisions $x_{t}^k, t=1,\ldots,T_{\max}$, are computed on this sample in a forward pass replacing (unknown) function 
$\mathcal{Q}_t(x_{t-1},1)$ by  $\mathcal{Q}_t^{k-1}(x_{t-1},1)$.
In the backward pass of iteration $k$, decisions $x_t^k$ are then used to compute coefficients
$\theta_t^k, \beta_t^k, t=2,\ldots,T_{\max}$.\\

\par {\textbf{SDDP-TSto, Step 1: Initialization.}} Fix an integer $N>1$. For $t=2,\ldots,T_{\max}$, take for $\mathcal{Q}_t^0(\cdot, 1)$ 
a known lower bounding affine function $\theta_t^0 + \langle \beta_t^0 , \cdot \rangle$  for $\mathcal{Q}_t(\cdot,1)$.
Set the iteration count $k$ to 1 and $\mathcal{Q}_{T_{\max}+1}^0(\cdot,1)=\mathcal{Q}_{T_{\max}+1}^0(\cdot,0)\equiv 0$,
$\mathcal{Q}_{t}^0(\cdot,0) \equiv 0, t=2,\ldots,T_{\max}$. Fix a parameter $0<\mbox{Tol}<1$ (for the stopping criterion).
Compute $q_t,t=2,\ldots,T_{\max}$ using  \eqref{qtformula} starting from $q_2=\mathbb{P}(T=2)$.\\
\par {\textbf{SDDP-TSto, Step 2: Forward pass.}}
We generate a sample
$$ 
(  (\tilde \xi_1^k ,  \tilde D_1^k ),(\tilde \xi_2^k ,  \tilde D_2^k ),\ldots,(\tilde \xi_{T_{\max}}^k ,  \tilde D_{T_{\max}}^k )),
$$
from the distribution of 
$$
\gamma^k =  ((\xi_1^k, D_1^k ),(\xi_2^k, D_2^k),\ldots,(\xi_{T_{\max}}^k, D_{T_{\max}}^k )) \sim
((\xi_1, D_1 ),(\xi_2, D_2),\ldots,(\xi_{T_{\max}}, D_{T_{\max}} )),
$$
with the convention that $\tilde \xi_1^k = \xi_1$, $\tilde D_1^k=1$. \\
Cost$_k =$ 0. $t \leftarrow 1$.\\
{\textbf{While }}$\tilde D_t^k=1$, we compute an optimal solution $x_t^k$ of 
\begin{equation}\label{pbforwardpassb}
\left\{
\begin{array}{l}
\inf_{x_t \in \mathbb{R}^n} f_t(x_t , x_{t-1}^k, \tilde \xi_t^k  ) + \mathcal{Q}_{t+1}^{k-1} ( x_t , 1  )\\
x_t \in X_t(x_{t-1}^k, {\tilde \xi}_{t}^k )
\end{array}
\right.
\end{equation}
\hspace*{1.2cm}where $x_0^k=x_0$.\\
\hspace*{1.2cm}Cost$_k \leftarrow \mbox{Cost}_k + f_t(x_t^k , x_{t-1}^k, \tilde \xi_t^k  )$.\\
\hspace*{1.2cm}$t \leftarrow t+1$.\\
{\textbf{End While}}\\
We compute an optimal solution $x_t^k$ of 
\begin{equation}\label{pbforwardpass1}
\left\{
\begin{array}{l}
\inf_{x_t \in \mathbb{R}^n} f_t(x_t , x_{t-1}^k, \tilde \xi_t^k  )   \\
x_t \in X_t(x_{t-1}^k, {\tilde \xi}_{t}^k ).
\end{array}
\right.
\end{equation}
Cost$_k \leftarrow \mbox{Cost}_k + f_t(x_t^k , x_{t-1}^k, \tilde \xi_t^k  )$.\\
$t \leftarrow t+1$.\\
{\textbf{While }}($t\leq T_{\max}-1)$, we compute an optimal solution $x_t^k$ of 
\begin{equation}\label{pbforwardpassbis}
\left\{
\begin{array}{l}
\inf_{x_t \in \mathbb{R}^n} 0\\
x_t \in X_t(x_{t-1}^k, {\tilde \xi}_{t}^k )
\end{array}
\right.
\end{equation}
\hspace*{1.2cm}(note that $\tilde D_t^k=0$ and the objective function is null, we only need a feasible point).\\
\hspace*{1.2cm}$t \leftarrow t+1$.\\
{\textbf{End While}}\\
\par {\textbf{Upper bound computation:}} If $k \geq N$ compute
$$
{\overline{\mbox{Cost}}}_k = \frac{1}{N} \sum_{j=k-N+1}^k \mbox{Cost}_j,\;\;{\hat \sigma}_{N,k}^2= \frac{1}{N}\sum_{j=k-N+1}^k [\mbox{Cost}_j - {\overline{\mbox{Cost}}}_k ]^2
$$
and the upper bound
$$
{\overline U}_k = {\overline{\mbox{Cost}}}_k + \frac{{\hat \sigma}_{N,k}}{\sqrt{N}} t_{N-1, 1-\alpha}
$$
where $t_{N-1, 1-\alpha}$ is the $(1-\alpha)$-quantile of the Student distribution with $N-1$ degrees of freedom.\\
\par {\textbf{SDDP-TSto, Step 3: Backward pass.}} Let ${\underline{\mathfrak{Q}}}_t^k( x_{t-1} , D_{t-1} , \xi_t, D_t )$ be the function given by
\begin{equation}\label{dp7k}
{\underline{\mathfrak{Q}}}_t^k (  x_{t-1}, D_{t-1} , \xi_t    , D_{t}     ) = 
\left\{ 
\begin{array}{l}
\inf_{x_t} \;D_{t-1} f_t(x_t, x_{t-1}, \xi_t ) + \mathcal{Q}_{t+1}^k ( x_{t}, D_t  )   \\ 
x_t \in X_t(x_{t-1}, \xi_t ).
\end{array}
\right.
\end{equation}
Set $\mathcal{Q}_{T_{\max}+1}^{k}(\cdot,1) =  \mathcal{Q}_{T_{\max}+1}^{k}(\cdot,0) \equiv 0$.\\
{\textbf{For}} $t=T_{\max}$ down to $t=2$, \\
\hspace*{0.5cm}Set $\mathcal{Q}_{t}^{k}(\cdot,0) \equiv 0$.\\
\hspace*{0.5cm}{\textbf{For }}$j=1,\ldots,M_t$,\\
\hspace*{1.2cm}Compute ${\underline{\mathfrak{Q}}}_t^k ( x_{t-1}^k, 1, \xi_{t j}, 1 )$, compute 
$$
\mathfrak{Q}_t  ( x_{t-1}^k, 1, \xi_{t j}, 0 )=
\left\{ 
\begin{array}{l}
\inf_{x_t} \;f_t(x_t, x_{t-1}^k, \xi_{t j} )  \\ 
x_t \in X_t(x_{t-1}^k, \xi_{t j} ),
\end{array}
\right.
$$
\hspace*{1.2cm}compute a subgradient $\beta_{t j}^k$ of  ${\underline{\mathfrak{Q}}}_t^k ( \cdot, 1, \xi_{t j}, 1 )$ at $x_{t-1}^k$
and a subgradient $\gamma_{t j}^k$ of \\
\hspace*{1.2cm}$\mathfrak{Q}_t  ( \cdot , 1, \xi_{t j}, 0 )$ at $x_{t-1}^k$.\\
\hspace*{0.5cm}{\textbf{End For}}\\
\hspace*{0.5cm}Compute
\begin{equation}\label{formulathetakbetak}
\begin{array}{lll}
\theta_t^k &= &(1-q_t)  \displaystyle  \sum_{j=1}^{M_t} p_{t j} \Big( {\underline{\mathfrak{Q}}}_t^k ( x_{t-1}^k, 1, \xi_{t j}, 1 )   -  \langle \beta_{t j}^k , x_{t-1}^k  \rangle   \Big) \\
&&+ q_t  \displaystyle \sum_{j=1}^{M_t} p_{t j} \Big( \mathfrak{Q}_t ( x_{t-1}^k, 1, \xi_{t j}, 0 )   -  \langle \gamma_{t j}^k , x_{t-1}^k  \rangle   \Big),\\
\beta_t^k &=& (1-q_t) \displaystyle  \sum_{j=1}^{M_t} p_{t j} \beta_{t j}^k + q_t  \sum_{j=1}^{M_t} p_{t j} \gamma_{t j}^k.
\end{array}
\end{equation}
{\textbf{End For}}\\
\par {\textbf{Lower bound computation:}} compute the lower bound $\underline{L}_k$ on the optimal value of \eqref{reformpb1} given by
$$
\underline{L}_k =
\left\{ 
\begin{array}{l}
\inf_{x_1} \;f_1(x_1, x_{0}, \xi_1 ) + \mathcal{Q}_{2}^k ( x_{1}, 1  )   \\ 
x_1 \in X_1(x_{0}, \xi_1 ).
\end{array}
\right.
$$
\par {\textbf{SDDP-TSto, Step 4:} If $k \geq N$ and  $\frac{{\overline U}_k - \underline{L}_k}{{\overline U}_k} \leq \mbox{Tol}$ then stop otherwise do $k \leftarrow k+1$ and go to Step 2.\\

We now show that the cuts computed by SDDP-TSto are valid, that $\underline{L}_k$ is a lower bound on the optimal value of
the problem and that the sequence of  approximate first stage problems optimal values converges almost surely
to the optimal value of \eqref{reformpb1}.
\begin{thm} 
Consider optimization problem \eqref{reformpb1} where for $t=1,\ldots,T_{\max}$,
$\xi_t$ does not depend on $(\xi_{[t-1]}, D_t )$
and $D_t$ only depends on $D_{t-1}$. Assume that (H1) holds and that the distribution of $\xi_t$ is discrete for $t=1,\ldots,T_{\max}$.
In the case of linear problems ($X_t$ as in \eqref{xtlin}) assume that (H2-L) holds and in the
case of nonlinear problems ($X_t$ as in \eqref{xtnonlin}) assume that (H2-NL)-(a)-(e) holds.
Consider the sequences $(x_t^k)_{k \geq 1}, t=1,\ldots,T_{\max}$ and $(\mathcal{Q}_t^k(\cdot,1))_{k \geq 0}, t=2,\ldots,T_{\max}$,
generated by SDDP-TSto to solve the corresponding dynamic programming equations \eqref{dp1simp0}, \eqref{dp1simp}, \eqref{dp1simpbis1}, \eqref{dp1simpbis2}.

Assume that samples in the forward passes are independent: the sample
$$
(  (\tilde \xi_1^k ,  \tilde D_1^k ),(\tilde \xi_2^k ,  \tilde D_2^k ),\ldots,(\tilde \xi_{T_{\max}}^k ,  \tilde D_{T_{\max}}^k ))
$$
in the forward pass of iteration $k$ is a realization of random vector 
$$
\gamma^k =  ((\xi_1^k, D_1^k ),(\xi_2^k, D_2^k),\ldots,(\xi_{T_{\max}}^k, D_{T_{\max}}^k )) 
$$
which has the distribution of $((\xi_1, D_1 ),(\xi_2, D_2),\ldots,(\xi_{T_{\max}}, D_{T_{\max}} )) $
and $\gamma^1,\gamma^2,\ldots$ are independent.
Then
\begin{itemize}
\item[(i)] for $t=2,\ldots,T_{\max}+1$, for all $k \geq 0$, $\mathcal{Q}_t^k(\cdot,1)$ is a lower bounding function for 
$\mathcal{Q}_t(\cdot,1)$: for all $x_{t-1}$ we have $\mathcal{Q}_t(x_{t-1},1) \geq \mathcal{Q}_t^k (x_{t-1},1)$ almost surely.
\item[(ii)] $\underline{L}_k$ computed in Step 3 of SDDP-TSto is a lower bound on the optimal value of \eqref{reformpb1}.  
\item[(iii)] Almost surely the limit of the sequence
$( f_1(x_1^k , x_0 , \xi_1 ) + \mathcal{Q}_2^k( x_1^k , 1 ) )_{k \geq 1}$ 
is the optimal value of \eqref{reformpb1}.
\end{itemize}
\end{thm}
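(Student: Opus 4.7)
Parts (i) and (ii) are routine inductive arguments; the heart of the theorem is (iii), which I plan to reduce to standard SDDP convergence theory applied to the two-state Markov-chain $(D_t)$, in the spirit of \cite{philpmatos}.

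For (i), I would induct backward on $t$ and forward on $k$. The base case $t=T_{\max}+1$ is trivial since $\mathcal{Q}_{T_{\max}+1}^k \equiv 0 \equiv \mathcal{Q}_{T_{\max}+1}$, and by construction $\mathcal{Q}_t^k(\cdot,0) \equiv 0 = \mathcal{Q}_t(\cdot,0)$ for every $t$ and $k$. Assuming that $\mathcal{Q}_{t+1}^{k}(\cdot,1) \leq \mathcal{Q}_{t+1}(\cdot,1)$ pointwise, substituting $\mathcal{Q}_{t+1}^k$ for $\mathcal{Q}_{t+1}$ inside the infimum \eqref{dp7k} yields $\underline{\mathfrak{Q}}_t^k \leq \mathfrak{Q}_t$. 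A subgradient of the convex function $\underline{\mathfrak{Q}}_t^k(\cdot,1,\xi_{tj},1)$ at $x_{t-1}^k$ produces, by the subgradient inequality, a valid affine minorant of $\underline{\mathfrak{Q}}_t^k(\cdot,1,\xi_{tj},1)$ and therefore of $\mathfrak{Q}_t(\cdot,1,\xi_{tj},1)$; the analogous statement holds for the $D_t=0$ branch via $\gamma_{tj}^k$. Taking the convex combination in \eqref{formulathetakbetak} with weights $(1-q_t)p_{tj}$ and $q_t p_{tj}$ yields an affine function that, by \eqref{dp1simp}, lower bounds $\mathcal{Q}_t(\cdot,1)$; taking the maximum with the previously accumulated cuts preserves this property, giving the induction. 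Existence of the subgradients $\beta_{tj}^k,\gamma_{tj}^k$ at $x_{t-1}^k$ is guaranteed by assumptions (H2-L) or (H2-NL)-(a)-(e), which provide the convexity and relative-interior qualifications needed to invoke standard strong-duality results (as in the references cited at the beginning of Section \ref{sec:sddp}).

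Part (ii) is immediate from (i): since $\mathcal{Q}_2^k(\cdot,1) \leq \mathcal{Q}_2(\cdot,1)$, the lower-bound problem defining $\underline{L}_k$ has objective dominated by the objective of \eqref{optvalTstomu}, whose optimal value coincides with that of \eqref{reformpb1}.

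Part (iii) is the main obstacle, and I would handle it by adapting the classical SDDP convergence proofs (Philpott--Guan \cite{philpot}, Girardeau et al., and the Markov-chain variant of Philpott--Matos \cite{philpmatos}) to the process $(D_t)$. The plan is: first, use (H2-L) or (H2-NL)-(a) together with the recursive feasibility statements in (H2-L)/(H2-NL)-(d) to conclude that the sequence of forward trial points $\{x_t^k\}_{k\geq 1}$ lies in a deterministic compact set; second, exploit the independence of $(\gamma^k)_{k\geq 1}$ and the finiteness of the supports of $\xi_t$ and $D_t$ to show that almost surely every pair $(\xi_{tj},D_t) \in \Theta_t \times \{0,1\}$ is sampled infinitely often along the forward passes, so the cuts \eqref{formulathetakbetak} get refined at a dense subsequence of visited trial states; third, proceed by backward induction on $t$ to show that, along any almost-sure convergent subsequence of $x_t^k$, $\mathcal{Q}_t^k(x_{t-1}^k,1) \to \mathcal{Q}_t(x_{t-1}^k,1)$. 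The critical point, where the Markov-chain structure actually helps, is that $\mathcal{Q}_t(\cdot,0) \equiv 0$ is known exactly, so the only cost-to-go branch that needs to be learned is $\mathcal{Q}_t(\cdot,1)$, and the recursion \eqref{dp1simp} is a finite convex combination of $M_t$ ordinary expected-value subproblems; consequently the convergence argument reduces to the single-branch SDDP recursion already analyzed in \cite{philpot,lecphilgirar12,guiguessiopt2016}, with $q_t, 1-q_t$ entering only as additional fixed weights in the law of $(\xi_t,D_t)$. The hardest technical step will be carefully propagating the uniform-in-$x_{t-1}^k$ exactness of the cuts from stage $t+1$ to stage $t$, which I would do as in \cite{philpot} via semicontinuity of the optimal value with respect to the right-hand side and the compactness obtained in the first step. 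Combined with (ii), this yields the advertised almost-sure convergence of $f_1(x_1^k,x_0,\xi_1)+\mathcal{Q}_2^k(x_1^k,1)$ to the optimal value of \eqref{reformpb1}.
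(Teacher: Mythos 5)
Your proposal is correct and follows essentially the same route as the paper: part (i) by the same double induction (forward on $k$, backward on $t$) combining the monotonicity of $\underline{\mathfrak{Q}}_t^k$ with the subgradient inequality and the convex combination defining $(\theta_t^k,\beta_t^k)$; part (ii) as an immediate consequence of (i) via \eqref{optvalTstomu}; and part (iii) by deferring to the standard SDDP convergence analyses of \cite{philpot} and \cite{guiguessiopt2016}, which is exactly what the paper does (your sketch of how those arguments adapt to the two-state chain $(D_t)$ is more detailed than the paper's one-line citation, but it is the same argument).
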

\begin{proof}
(i) The proof is by induction on $k$ and $t$. For $k=0$, we have $\mathcal{Q}_t(\cdot,1) \geq \mathcal{Q}_t^0(\cdot,1),t=2,\ldots,T_{\max}+1$.
Now assume that 
\begin{equation}\label{induction}
\mathcal{Q}_t(\cdot,1) \geq \mathcal{Q}_t^{k-1}(\cdot,1),t=2,\ldots,T_{\max}+1,
\end{equation}
for some $k \geq 1$.
We show by backward induction on $t$ that $\mathcal{Q}_t(\cdot,1) \geq \mathcal{Q}_t^{k}(\cdot,1),t=2,\ldots,T_{\max}+1$.
For $t=T_{\max}+1$ we have $\mathcal{Q}_t(\cdot,1) = \mathcal{Q}_t^{k}$ (both functions are null).
Now assume that 
$\mathcal{Q}_{t+1}(\cdot,1) \geq  \mathcal{Q}_{t+1}^{k}(\cdot,1)$
for some $t \in \{2,\ldots,T_{\max}\}$ (induction hypothesis).
We want to show that 
\begin{equation}\label{wanttoshow}
\mathcal{Q}_{t}(\cdot,1) \geq  \mathcal{Q}_{t}^k(\cdot, 1). 
\end{equation}
The induction hypothesis, together with the definitions of ${\underline{\mathfrak{Q}}}_t$
and $\mathfrak{Q}_t^k$ imply that for all $j=1,\ldots,M_t$:
\begin{equation}\label{firstineq}
\mathfrak{Q}_t (  \cdot, 1 , \xi_{t j}    , 1     ) \geq {\underline{\mathfrak{Q}}}_t^k (  \cdot, 1, \xi_{t j}, 1     ). 
\end{equation}
Therefore, we get
\begin{equation}\label{lboundthetabeta}
\begin{array}{lcl}
\mathcal{Q}_t( \cdot, 1 )& \stackrel{\eqref{dp1simp}}{=}    & (1-q_t) \displaystyle  \sum_{j=1}^{M_{t}} p_{t j} \mathfrak{Q}_t ( \cdot, 1, \xi_{t j}, 1 ) +
q_t  \sum_{j=1}^{M_t} p_{t j} \mathfrak{Q}_t ( \cdot, 1, \xi_{t j}, 0 ),\\
& \stackrel{\eqref{firstineq}}{\geq} & (1-q_t) \displaystyle \sum_{j=1}^{M_{t}} p_{t j} {\underline{\mathfrak{Q}}}_t^k (  \cdot, 1, \xi_{t j}, 1     )  +
q_t \sum_{j=1}^{M_t} p_{t j} \mathfrak{Q}_t ( \cdot, 1, \xi_{t j}, 0 ),\\
& \geq & (1-q_t) \displaystyle \sum_{j=1}^{M_{t}} p_{t j} \Big[   {\underline{\mathfrak{Q}}}_t^k (  x_{t-1}^k, 1, \xi_{t j}, 1     )  + \langle \beta_{t j}^k, \cdot - x_{t-1}^k \rangle \Big]\\
& & + q_t \displaystyle \sum_{j=1}^{M_t} p_{t j} \Big[  \mathfrak{Q}_t ( x_{t-1}^k, 1, \xi_{t j}, 0 )  + \langle \gamma_{t j}^k, \cdot - x_{t-1}^k \rangle        \Big],\\
& = & \theta_t^k + \langle \beta_t^k , \cdot \rangle \mbox{ by definition of }\theta_t^k, \beta_t^k,
\end{array}
\end{equation}
where for the second inequality we have used the subgradient inequality and the definition of $\beta_{t j}^k, \gamma_{t j}^k$.
Combining \eqref{induction}, \eqref{lboundthetabeta}, and the relation $\mathcal{Q}_t^k(\cdot,1) = \max(\mathcal{Q}_t^{k-1}(\cdot,1), \theta_t^k + \langle \beta_t^k , \cdot \rangle )$, we obtain \eqref{wanttoshow}, which achieves the induction step and the proof of (i).
\par (ii) It suffices to  observe that the optimal value of \eqref{reformpb1} is the optimal value of \eqref{optvalTstomu} and that, due to (i),
$\mathcal{Q}_2(x_1,1) \geq \mathcal{Q}_2^k(x_1,1)$ (recall that under our assumptions $\mathcal{Q}_2$ does not depend on $\xi_1$).
\par (iii) can be proved following the convergence proofs of SDDP from \cite{philpot} in the linear case and from \cite{guiguessiopt2016} in the nonlinear case
which apply under our assumptions.\hfill
\end{proof}
In the steps of SDDP-TSto above, we have not detailed the computation of $\beta_{t j}^k$ and $\gamma_{t j}^k$. In the linear and nonlinear settings 
mentioned above, the formulas for these coefficients are given below. When $\xi_t=\xi_{t j}$, we will denote by $A_{t j}, B_{t j}$, and $b_{t j}$ the realizations
of $A_t, B_t$, and $b_t$, respectively.\\

\par {\textbf{Computation of $\beta_{t j}^k$ and $\gamma_{t j}^k$ in the nonlinear case.}} 
Formulas for cuts computed by SDDP when $X_t$ is of form \eqref{xtnonlin} were given in Lemma 2.1 in \cite{guiguessiopt2016}.
We recall these formulas below.
For the 
optimization problem 
$$
{\underline{\mathfrak{Q}}}_t^k (  x_{t-1}^k, 1 , \xi_{t j}    , 1     ) = 
\left\{ 
\begin{array}{l}
\inf_{x_t} \;f_t(x_t, x_{t-1}^k , \xi_{t j} ) + \mathcal{Q}_{t+1}^k ( x_{t}, 1  )   \\ 
A_{t j} x_{t} + B_{t j} x_{t-1}^k = b_{t j},\;\;[\lambda_{t j}^{k 1}]\\
g_t(x_t, x_{t-1}^k, \xi_{t j}) \leq 0,\;\hspace*{0.5cm}[\mu_{t j}^{k 1}]\\
x_t \in \mathcal{X}_t,
\end{array}
\right.
$$
denote by $x_{t j}^{k 1}$ an optimal solution,
consider the Lagrangian 
$$
L(x_t, \lambda , \mu ; x_{t-1}^k , \xi_{t j} )= 
f_t(x_t, x_{t-1}^k , \xi_{t j} ) + \mathcal{Q}_{t+1}^k ( x_{t}, 1  ) + 
\lambda^T ( b_{t j} - A_{t j} x_{t} - B_{t j} x_{t-1}^k ) + \mu^T g_t(x_t, x_{t-1}^k, \xi_{t j}),
$$
and optimal Lagrange multipliers $(\lambda_{t j}^{k 1} , \mu_{t j}^{k 1} )$.
Similarly, for the 
optimization problem 
$$
\mathfrak{Q}_t (  x_{t-1}^k, 1 , \xi_{t j}    , 0     ) = 
\left\{ 
\begin{array}{l}
\inf_{x_t} \;f_t(x_t, x_{t-1}^k , \xi_{t j} )    \\ 
A_{t j} x_{t} + B_{t j} x_{t-1}^k = b_{t j},\;\;[\lambda_{t j}^{k 2}]\\
g_t(x_t, x_{t-1}^k, \xi_{t j}) \leq 0,\;\hspace*{0.5cm}[\mu_{t j}^{k 2}]\\
x_t \in \mathcal{X}_t,
\end{array}
\right.
$$
denote by $x_{t j}^{k 2}$ an optimal solution,
consider the Lagrangian 
$$
L(x_t, \lambda , \mu ; x_{t-1}^k , \xi_{t j} )= 
f_t(x_t, x_{t-1}^k , \xi_{t j} )  + 
\lambda^T ( b_{t j} - A_{t j} x_{t} - B_{t j} x_{t-1}^k ) + \mu^T g_t(x_t, x_{t-1}^k, \xi_{t j}),
$$
and optimal Lagrange multipliers $(\lambda_{t j}^{k 2} , \mu_{t j}^{k 2} )$. 

Let $f'_{t, x_{t-1}}( x_{t j}^{k 1} , x_{t-1}^k , \xi_{t j} )$ (resp. $f'_{t, x_{t-1}}( x_{t j}^{k 2} , x_{t-1}^k , \xi_{t j} )$)
be a subgradient of convex function \\
$f_{t}( x_{t j}^{k 1},\cdot,\xi_{t j} )$ (resp. $f_{t}( x_{t j}^{k 2} , \cdot , \xi_{t j} )$)   at $x_{t-1}^k$.
Let $g'_{t,i, x_{t-1}}( x_{t j}^{k 1} , x_{t-1}^k , \xi_{t j} )$ (resp. $g'_{t,i, x_{t-1}}( x_{t j}^{k 2} , x_{t-1}^k , \xi_{t j} )$)
be a subgradient of convex function $g_{t,i}( x_{t j}^{k 1} , \cdot , \xi_{t j} )$ (resp. $g_{t,i}( x_{t j}^{k 2} , \cdot , \xi_{t j} )$)   at $x_{t-1}^k$.
With this notation, setting
$$
\begin{array}{lll}
\beta_{t j}^k = \displaystyle  f'_{t, x_{t-1}}( x_{t j}^{k 1} , x_{t-1}^k , \xi_{t j} )- B_{t j}^T \lambda_{t j}^{k 1} + \sum_{i=1}^m \mu_{t j}^{k 1}( i ) g'_{t,i, x_{t-1}}( x_{t j}^{k 1} , x_{t-1}^k , \xi_{t j} ),\\
\gamma_{t j}^k = \displaystyle f'_{t, x_{t-1} }( x_{t j}^{k 2} , x_{t-1}^k , \xi_{t j} ) - B_{t j}^T \lambda_{t j}^{k 2} + \sum_{i=1}^m \mu_{t j}^{k 2}( i ) g'_{t, i, x_{t-1}}( x_{t j}^{k 2} , x_{t-1}^k , \xi_{t j} ),
\end{array}
$$
then $\beta_{t j}^k$ is a subgradient of  ${\underline{\mathfrak{Q}}}_t^k ( \cdot, 1, \xi_{t j}, 1 )$ at $x_{t-1}^k$
and $\gamma_{t j}^k$ is a subgradient  of  $\mathfrak{Q}_t  ( \cdot , 1, \xi_{t j}, 0 )$ at $x_{t-1}^k$.\\
\par {\textbf{Computation of $\beta_{t j}^k$ and $\gamma_{t j}^k$ in the linear case.}}
Formulas for the cuts in the linear case are well known.
Due to (H1-L) the optimal value of the linear program
$$
{\underline{\mathfrak{Q}}}_t^k (  x_{t-1}^k, 1 , \xi_{t j}    , 1     ) = 
\left\{ 
\begin{array}{l}
\inf_{x_t} \;c_{t j}^T x_t + \mathcal{Q}_{t+1}^k ( x_{t}, 1  )   \\ 
A_{t j} x_{t} + B_{t j} x_{t-1}^k = b_{t j},\;\;[\lambda_{t j}^{k 1}]\\
x_t \geq 0,
\end{array}
\right.
$$
is the optimal value of the corresponding dual problem: 
\begin{equation}\label{duallp1}
{\underline{\mathfrak{Q}}}_t^k (  x_{t-1}^k, 1 , \xi_{t j}    , 1     ) = 
\left\{ 
\begin{array}{l}
\sup_{\lambda , \mu } \;\lambda^T ( b_{t j}  - B_{t j} x_{t-1}^k ) + \sum_{i=1}^k \mu_i \theta_{t+1}^i  \\ 
A_{t j}^T  \lambda  + \sum_{i=1}^k \mu_i \beta_{t+1}^i \leq c_{t j}, \;\sum_{i=1}^k \mu_i =1, \\
\mu_i \geq 0, i=1,\ldots,k.
\end{array}
\right.
\end{equation}
Let $(\lambda_{t j}^{k 1}, \mu_{t j}^{k 1})$ be an optimal solution of dual problem \eqref{duallp1}.

Similarly, due to (H1-L) the optimal value of the linear program
$$
\mathfrak{Q}_t (  x_{t-1}^k, 1 , \xi_{t j}    , 0     ) = 
\left\{ 
\begin{array}{l}
\inf_{x_t} \;c_{t j}^T x_t    \\ 
A_{t j} x_{t} + B_{t j} x_{t-1}^k = b_{t j},\;\;[\lambda_{t j}^{k 2}]\\
x_t \geq 0,
\end{array}
\right.
$$
is the optimal value of the dual problem 
\begin{equation}\label{duallp2}
\mathfrak{Q}_t (  x_{t-1}^k, 1 , \xi_{t j}    , 0     ) = 
\left\{ 
\begin{array}{l}
\sup_{\lambda } \;\lambda^T ( b_{t j}  - B_{t j} x_{t-1}^k ) \\ 
A_{t j}^T  \lambda  \leq c_{t j}.
\end{array}
\right.
\end{equation}
Let $(\lambda_{t j}^{k 2})$ be an optimal solution of dual problem \eqref{duallp2}.

With this notation, setting
$$
\begin{array}{lll}
\beta_{t j}^k = - B_{t j}^T \lambda_{t j}^{k 1} \mbox{ and }
\gamma_{t j}^k = - B_{t j}^T \lambda_{t j}^{k 2},
\end{array}
$$
then $\beta_{t j}^k$ is a subgradient of  ${\underline{\mathfrak{Q}}}_t^k ( \cdot, 1, \xi_{t j}, 1 )$ at $x_{t-1}^k$
and $\gamma_{t j}^k$ is a subgradient  of  $\mathfrak{Q}_t  ( \cdot , 1, \xi_{t j}, 0 )$ at $x_{t-1}^k$.

\if{

\subsection{An alternative SDDP algorithm}

Since when the optimization period ends no future costs are incurred, it is sufficient
to stop the forward pass when we reach the final stage $T$, i.e., to compute decisions
$x_t^k,t=1,\ldots,T$, for iteration $k$.
Recalling Remark \ref{remsimplSDDPTSto}, we will still replace functions $\mathcal{Q}_t(\cdot,0)$ by the null function
and will denote by $\mathcal{Q}_{t}^k(\cdot)$ the approximation of $\mathcal{Q}_t(\cdot,1)$ obtained at  the
end of iteration $k$.
This approximation now takes the form
\begin{equation}\label{newapproxsddp}
\mathcal{Q}_t^k(x_{t-1}) = \displaystyle \max_{0 \leq j \leq N_{t}^k} \;\theta_t^j + \langle \beta_t^j , x_{t-1} \rangle.
\end{equation}
Note that for a given iteration $k$, the number of cuts $N_{t}^k$ is now not necessarily the same for all stages $t=2,\ldots,T$:   for two cost-to-go functions 
$\mathcal{Q}_{t_1}^k(x_{t-1},1)$ and $\mathcal{Q}_{t_2}^k(x_{t-1},1)$ with $t_1 \neq t_2$ the number of cuts computed 
$N_{t_1}^k$ and $N_{t_2}^k$ up to iteration $k$ can be different. This comes from the fact that the number of stages, and therefore the stage until which decisions are computed
at a given iteration, is random. The corresponding variant of SDDP-TSto, denoted SDDP'-TSto is given below.\\

\par {\textbf{SDDP'-TSto, Step 1: Initialization.}} Fix an integer $N>1$. For $t=2,\ldots,T_{\max}$, take for $\mathcal{Q}_t^0(\cdot)=\theta_t^0 + \langle \beta_t^0 , x_{t-1} \rangle$ 
a known lower bounding affine function for $\mathcal{Q}_t(\cdot,1)$.
Set the iteration count $k$ to 1, $\mathcal{Q}_{T_{\max}+1}^0(\cdot)  \equiv 0$, and set variable
$N_t^0$ (this variable will store the current number of cuts computed for $\mathcal{Q}_t(\cdot, 1)$) to $0$ for $t=2,\ldots,T_{\max}$.\\
\par {\textbf{SDDP'-TSto, Step 2: Forward pass.}} 
We generate a sample
$$ 
(  (\tilde \xi_1^k ,  \tilde D_1^k ),(\tilde \xi_2^k ,  \tilde D_2^k ),\ldots,(\tilde \xi_{T_{\max}}^k ,  \tilde D_{T_{\max}}^k )),
$$
from the distribution of $((\xi_1, D_1 ),(\xi_2, D_2),\ldots,(\xi_{T_{\max}}, D_{T_{\max}} ))$,
with the convention that $\tilde \xi_1^k = \xi_1$, $\tilde D_1^k=1$. \\
Cost$_k =$ 0, $t=1$.\\
{\textbf{While }}$\tilde D_t^k=1$, we compute an optimal solution $x_t^k$ of 
\begin{equation}\label{pbforwardpass}
\left\{
\begin{array}{l}
\inf_{x_t \in \mathbb{R}^n}  f_t(x_t , x_{t-1}^k, \tilde \xi_t^k  ) + \mathcal{Q}_{t+1}^{k-1} ( x_t  )\\
x_t \in X_t(x_{t-1}^k, {\tilde \xi}_{t}^k )
\end{array}
\right.
\end{equation}
\hspace*{1.2cm}where $x_0^k=x_0$.\\
\hspace*{1.2cm}Cost$_k \leftarrow $Cost$_k + f_t(x_t^k , x_{t-1}^k, \tilde \xi_t^k  )$.\\
\hspace*{1.2cm}$t \leftarrow t+1$.\\
{\textbf{End While}}\\
We compute an optimal solution $x_t^k$ of 
\begin{equation}\label{pbforwardpass1}
\left\{
\begin{array}{l}
\inf_{x_t \in \mathbb{R}^n}  f_t(x_t , x_{t-1}^k, \tilde \xi_t^k  ) \\
x_t \in X_t(x_{t-1}^k, {\tilde \xi}_{t}^k ).
\end{array}
\right.
\end{equation}
Cost$_k \leftarrow $Cost$_k + f_t(x_t^k , x_{t-1}^k, \tilde \xi_t^k  )$.\\
\par {\textbf{Upper bound computation:}} If $k \geq N$ compute
$$
{\overline{\mbox{Cost}}}_k = \frac{1}{N} \sum_{j=k-N+1}^k \mbox{Cost}_j,\;\;{\hat \sigma}_{N,k}^2= \frac{1}{N}\sum_{j=k-N+1}^k [\mbox{Cost}_j - {\overline{\mbox{Cost}}}_k ]^2,
$$
and the upper bound
$$
{\overline U}_k = {\overline{\mbox{Cost}}}_k + \frac{{\hat \sigma}_{N,k}}{\sqrt{N}} t_{N-1, 1-\alpha}
$$
where $t_{N-1, 1-\alpha}$ is the $(1-\alpha)$-quantile of the Student distribution with $N-1$ degrees of freedom.\\
\par {\textbf{SDDP'-TSto, Step 3: Backward pass.}}\\
Let ${\underline{\mathfrak{Q}}}_t^k( x_{t-1} , D_{t-1} , \xi_t, D_t )$ be the function given by
\begin{equation}\label{dp7knewbis}
{\underline{\mathfrak{Q}}}_t^k (  x_{t-1}, D_{t-1} , \xi_t    , D_{t}     ) = 
\left\{ 
\begin{array}{l}
\inf_{x_t} \;D_{t-1} f_t(x_t, x_{t-1}, \xi_t ) + \mathcal{Q}_{t+1}^k ( x_{t} )   \\ 
x_t \in X_t(x_{t-1}, \xi_t ).
\end{array}
\right.
\end{equation}
Note that functions $\mathcal{Q}_{t+1}^k$ in the right-hand-sides of \eqref{dp7knew} and \eqref{dp7knewbis} are not the same:
in  \eqref{dp7knew} this function is given by \eqref{sddpapprox} with $t$ replaced by $t+1$ and in
\eqref{dp7knewbis} it is given by \eqref{newapproxsddp} with $t$ replaced by $t+1$ (in these formulas both the sequence of coefficients $(\theta_t^k, \beta_t^k)_k$ 
and the number of affine pieces can differ).
Therefore ${\underline{\mathfrak{Q}}}_t^k$ defined in \eqref{dp7knewbis} is not the same as 
${\underline{\mathfrak{Q}}}_t^k$ given by \eqref{dp7knew}.\\
{\textbf{For}} $\tau=t$ down to $\tau=2$, \\
\hspace*{0.7cm}$N_\tau^k = N_{\tau}^{k-1} +1$.\\
\hspace*{0.7cm}{\textbf{For }}$j=1,\ldots,M_t$,\\
\hspace*{1.4cm}Compute ${\underline{\mathfrak{Q}}}_\tau^k ( x_{\tau-1}^k, 1, \xi_{\tau j}, 1 )$, compute
$$
\mathfrak{Q}_t  ( x_{\tau-1}^k, 1, \xi_{\tau j}, 0 )=
\left\{ 
\begin{array}{l}
\inf_{x_\tau} \;f_{\tau}(x_{\tau}, x_{\tau-1}^k, \xi_{\tau j} )  \\ 
x_{\tau} \in X_{\tau}(x_{\tau-1}^k, \xi_{\tau j} ),
\end{array}
\right.
$$
\hspace*{1.4cm}compute a subgradient $\beta_{\tau j}^k$ of  ${\underline{\mathfrak{Q}}}_\tau^k ( \cdot, 1, \xi_{\tau j}, 1 )$ at $x_{\tau-1}^k$
and a subgradient $\gamma_{\tau j}^k$\\
\hspace*{1.4cm}of $\mathfrak{Q}_\tau  ( \cdot , 1, \xi_{\tau j}, 0 )$ at $x_{\tau-1}^k$.\\
\hspace*{0.7cm}{\textbf{End For}}\\
\hspace*{0.7cm}Compute
$$
\begin{array}{lll}
\theta_\tau^{N_\tau^k } &= &(1-q_\tau)  \displaystyle  \sum_{j=1}^{M_\tau} p_{\tau j} \Big( {\underline{\mathfrak{Q}}}_\tau^k ( x_{\tau-1}^k, 1, \xi_{\tau j}, 1 )   -  \langle \beta_{\tau j}^k , x_{\tau-1}^k  \rangle   \Big) \\
&&+ q_\tau  \displaystyle \sum_{j=1}^{M_\tau} p_{\tau j} \Big( \mathfrak{Q}_\tau ( x_{\tau-1}^k, 1, \xi_{\tau j}, 1 )   -  \langle \gamma_{\tau j}^k , x_{\tau-1}^k  \rangle   \Big),\\
\beta_\tau^{ N_\tau^k } &=& (1-q_\tau) \displaystyle  \sum_{j=1}^{M_\tau} p_{\tau j} \beta_{\tau j}^k + q_\tau  \sum_{j=1}^{M_\tau} p_{\tau j} \gamma_{\tau j}^k,
\end{array}
$$
\hspace*{0.7cm}where $\gamma_{\tau j}^k$ and $\beta_{\tau j}^k$ are computed as explained above.\\
{\textbf{End For}}\\
{\textbf{For}} $\tau=t+1,\ldots,T_{\max}$\\
\hspace*{0.7cm}$N_\tau^k = N_{\tau}^{k-1}$.\\
{\textbf{End For}}\\
\par \par {\textbf{Lower bound computation:}} compute the lower bound $\underline{L}_k$ on $\mathcal{Q}_1( x_0 )$ given by
$$
\underline{L}_k =
\left\{ 
\begin{array}{l}
\inf_{x_1} \;f_1(x_1, x_{0}, \xi_1 ) + \mathcal{Q}_{2}^k ( x_{1})   \\ 
x_1 \in X_1(x_{0}, \xi_1 ).
\end{array}
\right.
$$
\par {\textbf{SDDP'-TSto, Step 4:}} If $\frac{{\overline U}_k - \underline{L}_k}{{\overline U}_k} \leq \mbox{Tol}$ stop otherwise do $k \leftarrow k+1$ and go to Step 2.\\

}\fi

\section{Numerical experiments: portfolio selection with a random investment period}\label{sec:numexp}

\subsection{SDDP-TSto for the portfolio problem} \label{sec:sddpportf}

We consider the portfolio problem given in Section  \ref{simpleportpb}
and the corresponding dynamic programming equations
\eqref{dp1simpterport}, \eqref{dp7terport}, \eqref{dp7terport2}
when the number of stages is stochastic.  
We now specialize the SDDP-TSto algorithm applied to these DP equations. In particular,
we explain how to compute coefficients $\gamma_{t j}^k$ and $\beta_{t j}^k$ for this problem.

In the forward pass of SDDP-TSto, problem \eqref{pbforwardpassb} becomes 
\begin{equation}\label{pbforwardpass}
\left\{
\begin{array}{l}
\inf_{x_t \in \mathbb{R}^n} \mathcal{Q}_{t+1}^{k-1} ( x_t , 1  )\\
x_t \in X_t(x_{t-1}^k, {\tilde \xi}_{t}^k )
\end{array}
\right.
\end{equation}
while problem \eqref{pbforwardpass1} specializes to
\begin{equation}\label{pbforwardpass1port}
\left\{
\begin{array}{l}
\inf_{x_t \in \mathbb{R}^n}  -\mathbb{E}[ \sum\limits_{i=1}^{n+1} \xi_{t+1}(i) x_{t}(i) ] \\
x_t \in X_t(x_{t-1}^k, {\tilde \xi}_{t}^k ).
\end{array}
\right.
\end{equation}
The backward pass is given below.\\
\par {\textbf{Backward pass of SDDP-TSto for the portfolio problem.}}
Set $\mathcal{Q}_{T_{\max}+1}^k(\cdot,1)\equiv 0$.\\
{\textbf{For}} $t=T_{\max}$ down to $t=2$, \\
\hspace*{0.7cm}{\textbf{For }}$j=1,\ldots,M_t$,\\
\hspace*{1.4cm}Solve the optimization problem 
$$
\left\{
\begin{array}{l}
\inf -\mathbb{E}[ \sum\limits_{i=1}^{n+1} \xi_{t+1}(i) x_{t}(i) ] \\
x_t \in X_t ( x_{t-1}^k , {\xi}_{t j} ),
\end{array}
\right.
$$
\hspace*{1.4cm}with Lagrangian $L(x_t,y_t,z_t,\lambda_1 ,\mu_1, \delta_1 )$ given by
$$
\begin{array}{l}
 -\mathbb{E}[ \sum\limits_{i=1}^{n+1} \xi_{t+1}(i) x_{t}(i) ]
+ \left  \langle \lambda_1 , \xi_{t j} \circ  x_{t-1}-x_t  + \left[
\begin{array}{c}
-y_t + z_t  \\
 ({\textbf{e}} - \eta_t  )^T y_t - ({\textbf{e}} + \nu_t  )^T z_t 
\end{array}
\right] \right \rangle \\
+ \left  \langle  \mu_{1} ,  y_{t}   -  \xi_{t j}(1:n) \circ  x_{t-1}(1;n) \right \rangle + 
\left   \langle \delta_{1}  , x_{t}(1:n) - (\xi_{t j}^T x_{t-1}) u \right  \rangle 
\end{array}
$$
\hspace*{1.4cm}where ${\textbf{e}}$ is a vector in $\mathbb{R}^{n}$ of ones, $\lambda_1 \in \mathbb{R}^{n+1}, \mu_1, \delta_1 \in \mathbb{R}^n$, and for vectors $x, y$, \\
\hspace*{1.4cm}the vector  $x \circ y$ has components $(x \circ y)(i)=x(i) y(i)$ and $\langle x , y \rangle = x^T y$.\\ 
\hspace*{1.4cm}For this Lagrangian, let $(\lambda_{1 t j}^k, \mu_{1 t j}^k, \delta_{1 t j}^k)$ be optimal Lagrange multipliers.\\
\hspace*{1.4cm}Solve the optimization problem 
$$
\left\{
\begin{array}{l}
\inf \mathcal{Q}_{t+1}^k ( x_t , 1) \\
x_t \in X_t ( x_{t-1}^k , {\xi}_{t j} ),
\end{array}
\right.
$$
\hspace*{1.4cm}with Lagrangian $L(x_t, y_t, z_t, \lambda_2 ,\mu_2, \delta_2 )$ given  by
$$
\begin{array}{l}
\mathcal{Q}_{t+1}^k ( x_t , 1) + \left  \langle \lambda_2 , \xi_{t j} \circ  x_{t-1}-x_t  + \left[
\begin{array}{c}
-y_t + z_t  \\
({\textbf{e}} - \eta_t  )^T y_t - ({\textbf{e}} + \nu_t  )^T z_t 
\end{array}
\right] \right \rangle \\
+ \left  \langle  \mu_{2} ,  y_{t}   -  \xi_{t j}(1:n) \circ  x_{t-1}(1;n) \right \rangle + 
\left   \langle \delta_2  , x_{t}(1:n) - (\xi_{t j}^T x_{t-1}) u \right  \rangle 
\end{array}
$$
\hspace*{1.4cm}where ${\textbf{e}}$ is a vector in $\mathbb{R}^{n}$ of ones, $\lambda_2 \in \mathbb{R}^{n+1}, \mu_2, \delta_2 \in \mathbb{R}^n$. \\
\hspace*{1.4cm} For this Lagrangian, let $(\lambda_{2 t j}^k, \mu_{2 t j}^k, \delta_{2 t j}^k)$ be optimal Lagrange multipliers.\\
\hspace*{1.4cm}Compute
$$
\begin{array}{l}
\gamma_{t j}^k =  \Big( \lambda_{1 t j}^k - ( u^T \delta_{1 t j}^k ) {\textbf{e}} - \left[ \begin{array}{c}\mu_{1 t j}^k\\0  \end{array}   \right]  \Big) \circ \xi_{t j},\\
\beta_{t j}^k = \Big(
\lambda_{2 t j}^k - ( u^T \delta_{2 t j}^k ) {\textbf{e}} - \left[ \begin{array}{c}\mu_{2 t j}^k\\0  \end{array}   \right]  \Big) \circ \xi_{t j},
\end{array}
$$
\hspace*{1.4cm}where ${\textbf{e}}$ is a vector in $\mathbb{R}^{n+1}$ of ones.\\
\hspace*{0.7cm}{\textbf{End For}}\\
\hspace*{0.7cm}Compute\footnote{Observe that the intercept for the cuts is zero for that application.}
$$
\begin{array}{l}
\theta_{t}^{k }=0 \mbox{ and }
\beta_{t}^{k } = (1-q_t) \displaystyle \sum_{j=1}^{M_t} p_{t j} \beta_{t j}^k + q_{t}\displaystyle \sum_{j=1}^{M_t} p_{t j}  \gamma_{t j}^k.
\end{array}
$$
{\textbf{End For}}

\subsection{Numerical results}\label{sec:numresults}

Our goal in this section is to compare SDDP run on an optimization period of $T_{\max}$ stages
(i.e., where we assume that the number of stages is known and fixed to $T_{\max}$) and SDDP-TSto 
on the risk-neutral portfolio problem with direct transaction costs given in Section \ref{simpleportpb}. 
All subproblems in the forward and backward passes of SDDP and SDDP-TSto were solved numerically using 
the interior point solver of the Mosek Optimization Toolbox \cite{mosek} and the Matlab code
was run on a Xeon E5-2670 processor with  384 GB of RAM. 
The following parameters are chosen for our experiments.\\

\par {\textbf{Distributions of $T$ and of returns $(\xi_t)$.}}
We consider two distributions for random variable $T$: a uniform distribution over 
the set $\{2,3,\ldots,T_{\max}\}$ and a 
truncated exponential discretized distribution.
The reason for choosing this latter distribution for a portfolio problem is motivated by
reference \cite{lifetimecompanies2015}, where the lifetime of more than 25 000 publicly traded 
North American companies, from 1950 to 2009, 
was analyzed. It was shown that {\em{mortality rates are independent of the company's age, the typical half-life of a publicly traded company is about a decade}},
and the exponential distribution is a good fit for the lifetime of these companies on this period.
Therefore, for such companies, the exponential distribution makes sense for the duration of an optimization period of a portfolio problem.
However, since the number of stages is almost surely in the set $\{2,\ldots,T_{\max}\}$, instead of an exponential distribution, we take for $T$ 
a translation of a
discretization of an exponential distribution conditioned on the event that this exponential distribution belongs to
$[\frac{1}{2},T_{\max}-\frac{1}{2}[$.
In what follows, we call that distribution truncated exponential discretized.\footnote{Considering the values chosen for the realizations of the returns and today's ``usual" asset returns,
we can consider that a stage corresponds to a few years and that the maximal duration of the optimization period $T_{\max}=10$ is a few decades.}
More precisely, let $X \sim \mathcal{E}(\lambda)$ be the exponential distribution with
parameter $\lambda=0.15$ with expectation $\mathbb{E}[X] = \frac{1}{\lambda}\approx 6.67$.
Setting $Y= X | A $ where $A$ is the event $A=\{\omega : \frac{1}{2} \leq X( \omega )  \leq T_{\max}-\frac{1}{2}  \}$
and defining the random variable $T$ by $T=t+1$ if and only if $t - \frac{1}{2} \leq Y < t + \frac{1}{2}$
for $t=1,\ldots,T_{\max}-1$, then the distribution of the number of stages $T$ is given by
$$
\mathbb{P}(T=t+1) = 
\mathbb{P}\Big( t-\frac{1}{2} \leq  Y  <  t + \frac{1}{2}  \Big) = 
\frac{\mathbb{P}(t-\frac{1}{2} \leq X < t + \frac{1}{2})}{\mathbb{P}(A)} = 
\frac{e^{-\lambda ( t-\frac{1}{2} )}   - e^{-\lambda ( t+\frac{1}{2} )}  }{ e^{-\lambda / 2 }   - e^{-\lambda ( T_{\max}- \frac{1}{2}  )}},
$$
for $t=1,\ldots,T_{\max}-1$. The histogram  of the distribution of $T$ when $T_{\max}=10$
 is represented in Figure \ref{fig:f_0} together with the graph of the density of 
$2+X$ over the interval $[2,10]$.
\begin{figure}
\centering
\includegraphics[scale=0.6]{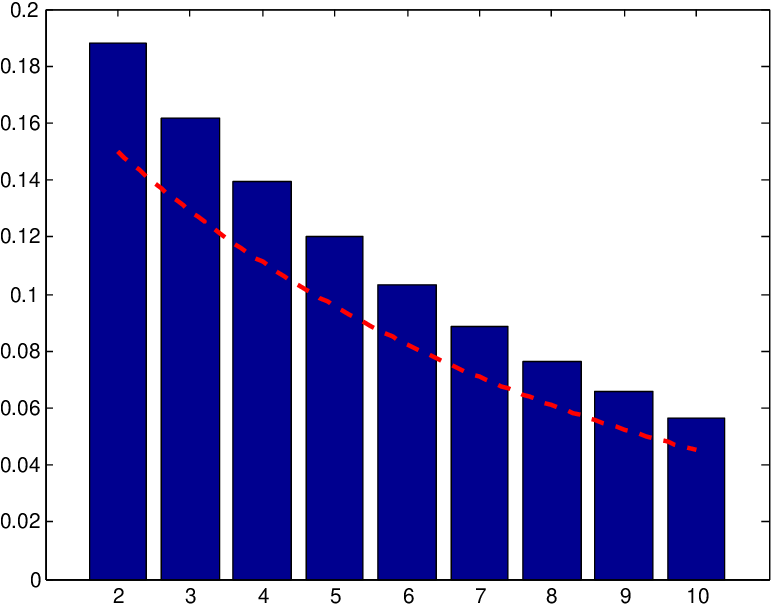}
\caption{ \label{fig:f_0} Histogram of the distribution of $T$ with support $\{2,\ldots,T_{\max}\}=\{2,\ldots,10\}$ and density 
of $2+\mathcal{E}(\lambda)$ (in dotted line) on the interval $[2,10]$ with $\lambda=0.15$.}
\end{figure}

The return of the risk-free asset $n+1$ is 1.01 for every stage.
Returns $\xi_t,t=2,\ldots,T_{\max}$, have discrete distributions with $M_t=M$ realizations, each having probability $\frac{1}{M}$ and realizations
$\xi_1(1:n), \xi_{t 1}(1:n), \ldots, \xi_{t M}(1:n)$ obtained sampling
from a normal distribution with mean and standard deviation chosen 
randomly in respectively the intervals $[0.9,1.4]$ and $[0.1,0.2]$.\\

\par {\textbf{Remaining parameters of the portfolio problem.}}
The initial portfolio $x_0$ has components $x_{0}(i), i=1,\ldots,n+1$, uniformly distributed in $[0,1000]$ (vector of initial wealth in each asset).
Transaction costs are known with
$\nu_t(i)=\mu_t(i)$
obtained sampling from the distribution of the random variable 
$0.08+0.06\cos(\frac{2\pi}{T} U_{T_{\max}} )$ where $U_{T_{\max}}$ is a random variable
with a discrete distribution over the set of integers $\{1,2,\ldots,T_{\max}\}$. The largest position in any security is set to $80\%$, i.e., $u(i)=0.8$ for $i=1,\ldots,n$.\\

\par {\textbf{Parameters of SDDP and SDDP-TSto methods.}} 
Using the notation of the previous section, SDDP and SDDP-TSto are run with 
parameters $N=400$, $\alpha=0.05$, and Tol=0.05.\\

\par {\textbf{Comparing SDDP and SDDP-TSto policies on Monte-Carlo simulations.}}\\

\par We consider 6 instances of the portfolio problem with random number of stages $T$ that we have just presented where the distribution of   
$T$ is either the truncated exponential discretized 
distribution given above 
or 
the uniform distribution on the set 
$\{2\ldots,T_{\max}\}$.
For each distribution of $T$,
we take 3 values for $(T_{\max},n,M)$
given in Table \ref{tableoptvalues}.
This defines  6 instances for which  we compute
two policies.

\par The first policy, denoted by SDDP, is obtained running SDDP on the portfolio problem
assuming, as has usually been the case so far in applications of stochastic programming, that the number
of stages is fixed. Since we need a policy defined for all possible stages from $t=1$
to $t=T_{\max}$, this SDDP policy is obtained running SDDP on a portfolio problem with a deterministic
number of stages fixed to $T_{\max}$. 
\par The second policy is obtained running the SDDP-Tsto algorithm from Section \ref{sec:algosddptsto} on the portfolio problem 
with random number of stages $T$. With a slight abuse of notation, we call the corresponding policy SDDP-TSto.

\par For each instance, we then simulate SDDP and SDDP-TSto policies
on 5 000 scenarios for $((D_1,\xi_1),$ $(D_2,\xi_2)$,$\ldots$,$(D_{T_{\max}},\xi_{T_{\max}}))$.
For each instance, the empirical mean income with both policies on these 5 000 scenarios 
is given in Table \ref{tableoptvalues}. We see that, as expected, the mean income is larger (sometimes much larger) with 
SDDP-TSto. We also report in this table the $p$-value of the paired Student  t-test
\begin{equation}\label{pairedttest}
H_0: \mathbb{E}[\displaystyle I_{\mbox{SDDP}}] \geq  \mathbb{E}[I_{\mbox{SDDP-TSto}}]
\mbox{ against }H_1: \mathbb{E}[I_{\mbox{SDDP}}] < \mathbb{E}[I_{\mbox{SDDP-TSto}}],
\end{equation}
computed using the 5 000 realizations of income of
SDDP-TSto and SDDP policies.
In this test, $I_{\mbox{SDDP}}$ and $I_{\mbox{SDDP-TSto}}$ are the income 
with respectively SDDP and SDDP-TSto policies.
We observe that for the three instances the $p$-value is small.
More precisely, on four instances the $p$-value is less than 5\%
meaning that at
the significance level $5\%$, we accept for these instances the hypothesis $H_1$, i.e., the hypothesis that the 
mean income with 
SDDP-TSto is greater than the mean income with SDDP policy.
For the remaining two instances, the $p$-values are 6.8\% and
7.2\%, i.e., we accept for these 2 instances the hypothesis that SDDP-TSto income 
is higher than SDPP income at any significance level above 7.2\%.

\begin{rem}
We performed the same Monte Carlo simulations on several other instances
with larger values of $n$ and $M$. For all our experiments, the empirical mean income with 
SDDP-TSto was higher than the empirical mean income with SDDP
but the $p$-value of paired t-test \eqref{pairedttest}
could be as high as 0.4. From these experiments, it seems
that the larger the size of the deterministic equivalent 
(in particular the larger $n$ and $M$), the larger the standard deviation
of the income and therefore the less paired $t$-test \eqref{pairedttest}
 will accept  (at the significance level 5\%) the hypothesis that
 SDDP-TSto income is higher.
\end{rem}

\begin{table}
\begin{tabular}{|c|c|c|c|c|c|c|}
\hline
Distribution of $T$& $T_{\max}$ & $n$ & $M$   & \begin{tabular}{c}Empirical mean\\income\\SDDP-TSto\end{tabular} & \begin{tabular}{c}Empirical mean\\income\\SDDP\end{tabular} & $p$-value\\
\hline
 \begin{tabular}{c}Uniform\\on $2,\ldots,T_{\max}$\end{tabular} &  10 &  10 & 50 &   14 973 &   14 563  &    0.035 \\
\hline 
\begin{tabular}{c}Truncated\\exponential discretized\end{tabular} & 10 & 10    & 50 & 12  466   & 12 138    &  0.002   \\
\hline 
 \begin{tabular}{c}Uniform\\on $2,\ldots,T_{\max}$\end{tabular} & 10 &  20   & 50 &  34 410  & 33 875    &   0.072  \\
\hline 
\begin{tabular}{c}Truncated\\exponential discretized\end{tabular} &10 &  20    & 50 &  32 783  &  31 121    & $6.5\small{\times}10^{-8}$    \\
\hline 
\begin{tabular}{c}Uniform\\on $2,\ldots,T_{\max}$\end{tabular} &  10 & 10  & 100 & 12 977   & 12 797    & 0.068    \\
\hline 
\begin{tabular}{c}Truncated\\exponential discretized\end{tabular} & 20 &  10   & 20 &  24 181  & 23 700    &  0.040   \\
\hline 
\end{tabular}
\caption{Empirical mean income for SDDP-TSto and SDDP policies on 6 instances of the portfolio problem
with a random number of stages and $p$-values of the test 
$H_0: \mathbb{E}[\displaystyle I_{\mbox{SDDP}}] \geq  \mathbb{E}[I_{\mbox{SDDP-TSto}}]$ against 
$H_1: \mathbb{E}[I_{\mbox{SDDP}}] < \mathbb{E}[I_{\mbox{SDDP-TSto}}]$
for these instances, on the basis of samples of size 5 000 of
the income of SDDP-TSto and SDDP policies.
In $H_0$ and $H_1$, $I_{\mbox{SDDP}}$ and $I_{\mbox{SDDP-TSto}}$ are the income 
with respectively SDDP and SDDP-TSto policies.
}\label{tableoptvalues}
\end{table}

\par {\textbf{Comparing the empirical distribution of the solution time for SDDP and SDDP-TSto.}}\\

\par When we face a multistage stochastic program with a random number of stages $T$ with 
support the set of integers $\{2,3,\ldots,T_{\max}\}$, we also expect SDDP-TSto policy
to be computed quicker than SDDP policy, recalling that the latter is obtained running SDDP
assuming the number of stages is fixed and equal to $T_{\max}$.
Indeed, in the forward pass of iteration $k$ of SDDP-TSto, once we reach the first stage $t_0$ such that 
$\tilde D_{t_0}^k = 0$, for subproblems of subsequent  stages $t>t_0$, the objective function
is null, i.e., all that  the trial points have to satisfy is to be feasible.
In the case of our portfolio problem, we can even obtain a drastic reduction in the time
needed to compute SDDP-TSto policy as can be seen on Figure \ref{fig:f_1}
which reports the empirical distribution of the computational time needed to solve 
 instances of the portfolio problem
(with truncated exponential discretized distribution for
$T$) 10 times with SDDP and SDDP-TSto.    
Accross the 10 runs, the time needed to compute a given  policy
does not vary much and the computational bulk with SDDP-TSto is around 10 times
inferior to the computational bulk with SDDP.\\


\begin{figure}
\centering
\begin{tabular}{cc}
\includegraphics[scale=0.6]{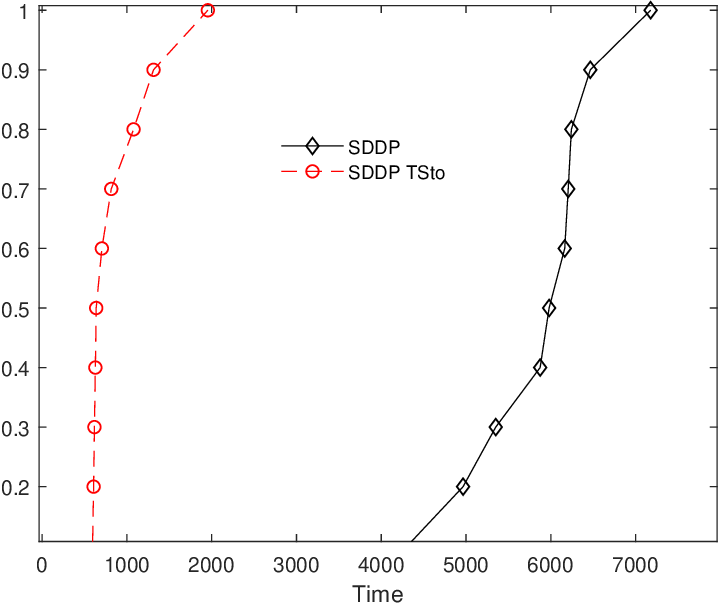}
&
\includegraphics[scale=0.6]{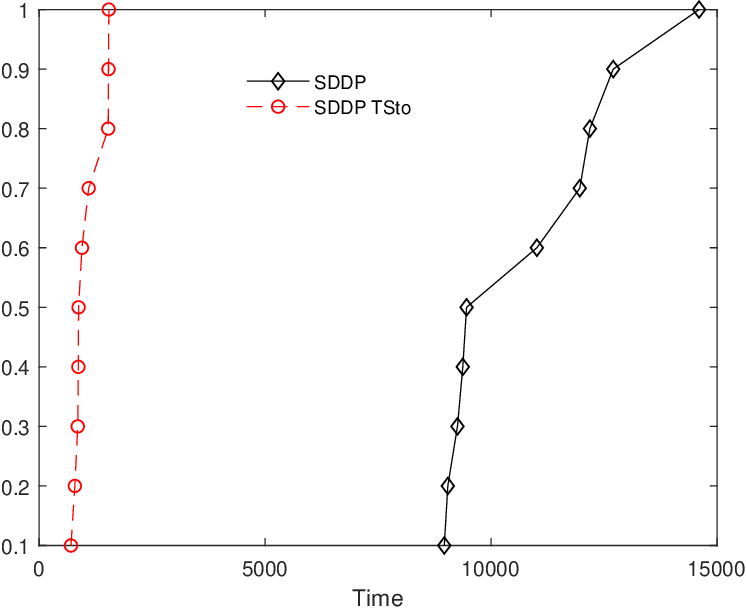}\\
$M=50, n=10, T_{\max}=10$ &  $M=50, n=20, T_{\max}=10$      \\
&\\
\end{tabular}\\
\begin{tabular}{cc}
\includegraphics[scale=0.6]{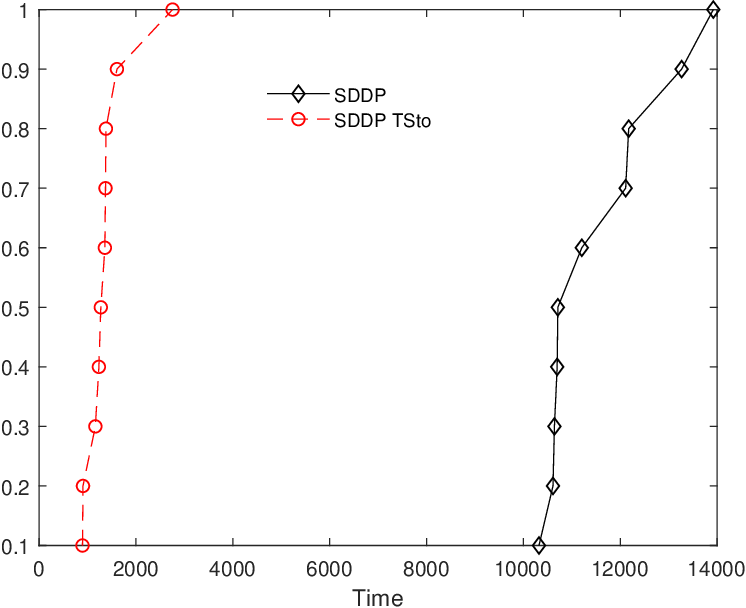}\\
$M=100, n=10, T_{\max}=10$ \\
\end{tabular}\\
\caption{Empirical distribution of the computational time needed to solve instances of the portfolio problem
with truncated exponential discretized distribution for
$T$ (for each instance, SDDP and SDDP-TSto are run 10 times).}\label{fig:f_1}
\end{figure}

\par {\textbf{Summary of the advantages of the proposed methodology and solution method.}}\\

\par So far, to our knowledge, stochastic programs with a random number of stages had not been
considered in the stochastic programming community. Therefore, in the presence of a multistage
stochastic program with a random number $T$ of stages upper bounded by $T_{\max}$, the traditional approach would be 
in fact to ignore the stochasticity of $T$, to
assume that the number of stages is fixed to $T_{\max}$ so that a policy can be defined for all possible
stages $t=1,2,\ldots,T_{\max}$, and, as long as the problem is convex, a popular approach
to solve such a problem would be to use SDDP. Compared to this approach, the tools proposed in this article
offer the following advantages:
\begin{itemize}
\item The Dynamic Programming equations written in Section \ref{dpequations} define the 
appropriate Bellman functions for the problem under consideration, which, obviously, yield an optimal cost lower
than the one obtained using Bellman functions defined by Dynamic Programming equations
for a problem with $T_{\max}$ fixed stages.
Observe also that the model and Dynamic Programming equations of Section \ref{dpequations}
do not require any assumption of convexity.
\item If the stochastic program is convex and
as long as SDDP-TSto algorithm given in Section \ref{sec:sddp} 
and SDDP (run on the Dynamic Programming equations with $T_{\max}$ fixed stages) 
are run for a sufficient amount of iterations to satisfy a stopping
criterion with small values of $\alpha$ and Tol, SDDP-TSto policy will be better (and can be much better)
than SDDP policy, see the experiments reported in this section on the portfolio problem.
\item We expect SDDP-TSto policy  to be computed quicker than SDDP policy (the latter being run 
on a problem with $T_{\max}$ stages).
For our portfolio problem with a random number of stages, SDDP-TSto was computed up to 10 times faster
than SDDP, see Figure \ref{fig:f_1}.
\end{itemize}

\section{Conclusion}

We introduced the class of multistage stochastic programs with a random number of stages.
We explained how to write dynamic programming equations for such problems and detailed the SDDP algorithm
to solve these dynamic programming equations. 
We have shown the applicability and interest of the  proposed models and methodology for portfolio
selection.

As a future work, it would be interesting to consider more general ``hybrid" stochastic programs
with transition probabilities between objective and cost functions, meaning that at each stage
not only parameters but also cost and constraint functions are random, possibly depending on past values
of parameters and cost and constraint functions. 
Other possible extensions could consider stages of random durations or problems where decisions are 
only taken at random times driven by an exogenous stochastic process. The random times at which
these decisions would be taken could also depend on previous decisions.
Finally, it would be interesting to use the proposed models and methodology for other
applications, for instance Asset Liability Management or the applications mentioned in the introduction.

\section*{Acknowledgments} The author's research was 
partially supported by an FGV grant, CNPq grants 311289/2016-9 and 
401371/2014-0, and FAPERJ grant E-26/201.599/2014. The author would like to thank
Alexander Shapiro for helpful discussions.

\addcontentsline{toc}{section}{References}
\bibliographystyle{plain}
\bibliography{SDDP_T_Sto_Conv}

\end{document}